\documentclass[12pt]{amsart}

\oddsidemargin=0cm \evensidemargin=0cm

\textwidth 155mm \textheight 210mm

\usepackage[utf8]{inputenc}
\usepackage{bm}
\usepackage{color}
\usepackage{a4wide}
\usepackage{amsmath,amssymb,amscd,amsthm,amsfonts}
\usepackage{nameref, hyperref}
\usepackage{kotex}
\usepackage{esint}
\usepackage{indentfirst}
\usepackage{graphicx}
\usepackage{mathtools}
\usepackage{mathabx}
\usepackage{bbm}
\usepackage{setspace}
\usepackage{tikz}
\usepackage{array}
\usepackage{textcomp}
\usepackage{enumitem}

\setlist{
leftmargin=*,
label=(\arabic*)
}

\newlist{remlist}{enumerate}{1}
\setlist[remlist]{label=(\arabic{remlisti}), ref=\thedefinition.(\arabic{remlisti}),noitemsep}

\newlist{thmlist}{enumerate}{1}
\setlist[thmlist]{label=(\arabic{thmlisti}), ref=\thedefinition.(\arabic{thmlisti}),noitemsep}

\usepackage[capitalize]{cleveref}
\usepackage{letltxmacro}
\usepackage{thmtools}

\newcommand{\QQ}{\mathbf{Q}}
\newcommand{\RR}{\mathbf{R}}
\newcommand{\CC}{\mathbf{C}}
\newcommand{\ZZ}{\mathbf{Z}}
\newcommand{\mcH}{\mathcal{H}}
\newcommand{\mcV}{\mathcal{V}}

\newcommand*{\medcap}{\mathbin{\scalebox{1.5}{\ensuremath{\cap}}}}%

\newcommand{\abs}[1]{\lvert #1 \rvert}
\newcommand{\inp}[1]{\langle #1 \rangle}
\newcommand{\bigo}{\mathrm{O}}
\newcommand{\restr}[2]{\ensuremath{\left.#1\right|_{#2}}}

\DeclareMathOperator{\interior}{int}
\DeclareMathOperator{\dom}{dom}
\DeclareMathOperator{\epi}{epi}
\DeclareMathOperator{\conv}{conv}
\DeclareMathOperator{\cone}{cone}

\theoremstyle{plain}

\newtheorem{definition}{Definition}[section]
\newtheorem{theorem}[definition]{Theorem}
\newtheorem{corollary}[definition]{Corollary}
\newtheorem{lemma}[definition]{Lemma}
\newtheorem{proposition}[definition]{Proposition}

\newtheorem{question}[definition]{Question}
\newtheorem{example}[definition]{Example}
\newtheorem{rmk}[definition]{Remark}

\begin{document}

\onehalfspacing

\title[On equisingular approximation of plurisubharmonic functions]{On  equisingular approximation   of \\ plurisubharmonic functions}

\keywords{Plurisubharmonic functions, Equisingular approximation, Decreasing approximation, Multiplier ideal sheaves}

\subjclass[2010]{Primary 14F18; Secondary 52B99}

\author{Jongbong An and Hoseob Seo}

\begin{abstract}

It is a natural question to ask which plurisubharmonic functions admit a `nice' approximation in the sense of a decreasing equisingular approximation with analytic singularities. For arbitrary toric plurisubharmonic functions, we give a criterion  for admitting a nice approximation with toric approximants. Our results are motivated by a recent result of Guan for toric plurisubharmonic functions of the diagonal type. 
\end{abstract}

\maketitle \vspace{-2em}

\section{Introduction}

Plurisubharmonic (psh for short) functions are fundamental objects in several complex variables. They play important roles in an increasing number of applications in complex analysis and geometry, cf. \cite{D10}.

In the fundamental work \cite{D92}, Demailly gave a crucial method of approximating a general psh function $\varphi$ by ones easier to understand, namely $\varphi_m$ with analytic singularities  given by multiplier ideals $\mathcal{J} (m\varphi)$ (up to power $\frac{1}{m}$) for $m \ge 1$. This Demailly approximation of a psh function has had far-reaching developments and applications, see e.g. \cite{DK01}, \cite{DPS01}, \cite{D10}, \cite{D13}, \cite{R12}, \cite{K14},  \cite{G16}, \cite{G20} and others. 

In \cite[Theorem~2.3]{DPS01}, an important variant of Demailly approximation was established where one can approximate $\varphi$ by a decreasing {\bf equisingular} sequence $\varphi_m \to \varphi$ in the sense that the multiplier ideals are constant, i.e.  $\mathcal{J} (\varphi_m) = \mathcal{J} (\varphi)$ for every $m \ge 1$.  Such a decreasing equisingular approximation was applied in the  proof of the hard Lefschetz theorem \cite[Theorem~2.1]{DPS01}. 

 It was then natural to ask whether such a decreasing equisingular approximation can be taken so that the approximants $\varphi_m$ also  have analytic singularities as in the original Demailly approximation.
  Qi'an Guan  showed by an example \cite{G16} that in general, one cannot expect all three of `decreasing', `equisingular' and `analytic singularities' to hold simultaneously for approximations of psh functions. On the other hand, it is known (from \cite{D92}, \cite{DPS01} and \cite{D13}) that any two of the three can be made to hold in an approximation.

  One can then ask when a psh function $\varphi$ admits such a nice approximation, i.e. a decreasing equisingular approximation by psh functions with analytic singularities. For this question, it suffices to assume that $\varphi$ does not have analytic singularities, since otherwise one can use the constant sequence given by $\varphi$ itself.

In this direction,  for toric psh functions of the diagonal type, Qi'an Guan gave the following criterion for the existence of a nice approximation.

\begin{theorem}[Qi'an Guan] \cite[Theorem~1.1]{G20} \label{G20}
Let $1 \le m \le n$ be integers. Let $a_1, \ldots, a_m$ be positive real numbers. Let $\varphi = \log \sum\limits_{i=1}^m \abs{z_i}^{a_i}$ as a psh function on $\CC^n$. Suppose that $\varphi$ does not have analytic singularities. Then the following (1) and (2) are equivalent : 
\begin{thmlist}
\item  $\varphi$ admits a nice approximation, i.e. a  decreasing equisingular approximation by psh functions with analytic singularities near $0$.
\item The equation $\sum\limits_{i=1}^m \frac{x_i}{a_i} = 1$ has no positive integer solutions. \label{G20_2}
\end{thmlist}
\end{theorem}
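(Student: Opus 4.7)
The natural first step is to encode $\mathcal{J}(\varphi)$ combinatorially. By the Howald-type formula for toric psh functions, one has near $0$
\[
\mathcal{J}(\varphi) = \bigl( z^\alpha : \alpha \in \ZZ_{\ge 0}^n,\ \textstyle\sum_{i=1}^m (\alpha_i+1)/a_i > 1 \bigr).
\]
Under this identification, condition (2) says precisely that no lattice point $\alpha \in \ZZ_{\ge 0}^m$ lies on the shifted hyperplane $\sum_i (\alpha_i+1)/a_i = 1$; consequently the set $S := \{\alpha \in \ZZ_{\ge 0}^m : \sum_i (\alpha_i+1)/a_i < 1\}$ is finite and each of its members enjoys a uniform positive gap from $1$.

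For (2) $\Rightarrow$ (1), my plan is to produce an explicit toric nice approximation. Pick rationals $a_i^{(k)}$ with $a_i^{(k)} \nearrow a_i$ strictly from below when $a_i$ is irrational (and $a_i^{(k)} = a_i$ otherwise), and set $\varphi_k := \log \sum_{i=1}^m |z_i|^{a_i^{(k)}}$. Each $\varphi_k$ has analytic singularities: after clearing a common denominator $q_k$ of the $a_i^{(k)}$ and applying the elementary comparison $\max_i |z_i|^{a_i^{(k)}} \le \sum_i |z_i|^{a_i^{(k)}} \le m \max_i |z_i|^{a_i^{(k)}}$, the function $q_k \varphi_k$ agrees up to a bounded error with $\log \sum_i |z_i^{p_i^{(k)}}|^2$ for positive integers $p_i^{(k)}$. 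Monotonicity of $|z_i|^a$ in $a$ on a polydisc around $0$ gives $\varphi_k \searrow \varphi$. For equisingularity, Howald applied to $\varphi_k$ yields $\mathcal{J}(\varphi_k) = \{z^\alpha : \sum_i (\alpha_i+1)/a_i^{(k)} > 1\}$; the inclusion $\mathcal{J}(\varphi) \subseteq \mathcal{J}(\varphi_k)$ is automatic from $a_i^{(k)} \le a_i$, while the reverse follows from the uniform positive gap afforded by condition (2) on the finitely many $\alpha \in S$, since choosing $a_i^{(k)}$ close enough to $a_i$ preserves $\sum_i (\alpha_i+1)/a_i^{(k)} < 1$ for each such $\alpha$.

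For (1) $\Rightarrow$ (2) I would argue the contrapositive. Given $(b_1,\dots,b_m) \in \ZZ_{>0}^m$ with $\sum_i b_i/a_i = 1$, set $\alpha := b - \mathbf{1}$, so $\sum_i (\alpha_i+1)/a_i = 1$; then $z^\alpha \notin \mathcal{J}(\varphi)$ while $z^\alpha \in \mathcal{J}((1-\epsilon)\varphi)$ for every $\epsilon \in (0,1)$. Suppose for contradiction a nice approximation $\varphi_k \searrow \varphi$ exists with $\mathcal{J}(\varphi_k) = \mathcal{J}(\varphi)$. Since $(1-\epsilon)\varphi_k \ge (1-\epsilon)\varphi$, we deduce $z^\alpha \in \mathcal{J}((1-\epsilon)\varphi_k)$ for every $\epsilon > 0$ and every $k$. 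Because $\varphi_k$ has analytic singularities, $t \mapsto \mathcal{J}(t\varphi_k)$ is piecewise constant in $t$ with rational jumping numbers (via a log resolution of the defining ideal), and the persistence of $z^\alpha$ in every $\mathcal{J}((1-\epsilon)\varphi_k)$ forces $t=1$ to be a jumping number of each $\varphi_k$. The contradiction must then come from combining this rigid rational jumping-number structure with the equisingularity $\mathcal{J}(\varphi_k) = \mathcal{J}(\varphi)$ and the decreasing convergence to the non-analytic $\varphi$.

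The main obstacle is precisely this last step. Writing $\varphi_k = (1/N_k) \log |h_k|^2 + O(1)$ with $N_k \in \ZZ_{>0}$ and $h_k$ a tuple of holomorphic functions, the hypothesis that $\varphi$ has no analytic singularities forces $N_k \to \infty$, so the denominators appearing in the rational jumping numbers of $\varphi_k$ blow up, while the boundary monomial $z^\alpha$ produces a fixed rational jump at $t=1$ for every $k$. Matching these incompatible rational scales --- plausibly via a subadditivity/restriction argument for multiplier ideals applied to the pair $(\varphi_k,\varphi)$, combined with an Ohsawa--Takegoshi $L^2$ extension along the weight $(b_1,\ldots,b_m)$ --- is the route I would pursue to extract the desired contradiction.
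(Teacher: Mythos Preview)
This theorem is not proved in the present paper; it is quoted from Guan's paper \cite{G20} as motivation. The only related content here is the observation after the statement that condition (2) is equivalent to ``$1$ is not a jumping number of $\varphi$'', and the remark that the polyhedral condition of Theorem~\ref{mainthm1} implies condition (2) for these diagonal $\varphi$. So there is no ``paper's own proof'' to compare against; I comment instead on the soundness of your approach.

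Your construction for (2)$\Rightarrow$(1) is correct and in fact produces a \emph{toric} nice approximation: approximating the irrational exponents $a_i$ from below by rationals gives $\varphi_k \searrow \varphi$ on a small polydisk, each $\varphi_k$ has analytic singularities, and the equisingularity follows from the uniform gap on the finite set $S$ exactly as you say. This is a clean elementary argument.

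The direction (1)$\Rightarrow$(2), however, has a real gap that you essentially acknowledge. You correctly deduce that if $b\in\ZZ_{>0}^m$ solves $\sum b_i/a_i=1$ and $\alpha=b-\mathbf 1$, then for any nice approximant $\varphi_k$ one has $z^\alpha\in\mathcal J((1-\epsilon)\varphi_k)$ for every $\epsilon>0$ while $z^\alpha\notin\mathcal J(\varphi_k)$, so $t=1$ is a jumping number of each $\varphi_k$. But the proposed contradiction---that the denominators $N_k$ of the analytic weights blow up while $1$ persists as a jump---does not follow: $1=N_k/N_k$ is compatible with any $N_k$, and nothing prevents each $\varphi_k$ from having $1$ among its (rational) jumping numbers. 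Your suggested route via subadditivity or Ohsawa--Takegoshi is not made precise, and I do not see how either yields the needed incompatibility.

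More to the point, this paper's Corollary~\ref{cor:always_approximated} shows that the implication ``$\varphi$ admits a nice approximation $\Rightarrow$ $1$ is not a jumping number of $\varphi$'' \emph{fails} for general toric psh functions. Hence any proof of (1)$\Rightarrow$(2) must use something specific to the diagonal form $\varphi=\log\sum|z_i|^{a_i}$ beyond the jumping-number heuristic you sketch. Guan's original argument exploits this special structure; your outline does not yet isolate what that extra ingredient is.
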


The function $\varphi$ in Theorem~\ref{G20} can indeed have non-analytic singularities when $a_i$'s are irrational, see Example~\ref{root}. 
We also  note that the condition (2) in Theorem~\ref{G20} is equivalent to the following one in terms of jumping numbers of multiplier ideals :

{\it (2') The number $1$ is not a jumping number of $\varphi$ at $0$.}

This follows from  \cite[Proposition~3.3]{KS20} and  the facts that the set of jumping numbers of $\varphi$ of the form $\log{ \sum_{i=1}^m { \abs{z_i}^{a_i} } }$ is discrete  and 
 that the Newton convex body of $\varphi = \log{ \sum_{i=1}^m { \abs{z_i}^{a_i}}}$ is given by
\[
  P(\varphi) = \left\{ \sum_{i=1}^{m} t_ia_i e_i  : t_1, \ldots, t_m \ge 0, t_1 + \cdots + t_m = 1 \right\} + \RR^n_{\ge 0}.
\]

 In this paper, we have the following main result  which is a weaker version of Theorem~\ref{G20} for arbitrary toric psh functions: weaker in that the approximants are also toric. 

\begin{theorem} \label{mainthm1}
Let $\varphi$ be a toric psh function defined on $D(0,r) \subset \CC^n$. Then the following are equivalent.
\begin{thmlist}
\item $\varphi$ admits a nice toric approximation, i.e. a decreasing equisingular approximation by toric psh functions with analytic singularities.
\item There exists a polyhedron $P \subseteq \RR^n_{\ge 0}$ satisfying the following three conditions: \label{mainthm1_2}
\begin{enumerate}[label=(\roman*)]
    \item $(2/c)P$ is a rational polyhedron for some $c>0$,
    \item $P(\varphi) \subseteq P$ and $P + \RR^n_{\ge 0} \subseteq P$,
    \item $(\interior P) \cap \ZZ^n_{\ge 0} = (\interior P(\varphi)) \cap \ZZ^n_{\ge 0}$.
\end{enumerate}
\end{thmlist}
\end{theorem}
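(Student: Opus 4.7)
The plan is to translate the problem through the correspondence between toric psh functions on $D(0,r)$ and their Newton convex bodies $P(\psi) \subseteq \RR^n_{\ge 0}$, under which the three features in play translate as follows: (a) a toric $\psi$ has analytic singularities iff $(2/c) P(\psi)$ is a rational polyhedron for some $c > 0$; (b) $\mathcal{J}(\psi)$ is the monomial ideal generated by $z^\alpha$ for $\alpha \in \interior P(\psi) \cap \ZZ^n_{\ge 0}$, so equisingularity $\mathcal{J}(\psi_m) = \mathcal{J}(\psi)$ reads $\interior P(\psi_m) \cap \ZZ^n_{\ge 0} = \interior P(\psi) \cap \ZZ^n_{\ge 0}$; and (c) pointwise decreasing convergence $\psi_m \searrow \psi$ corresponds to a nested decreasing sequence $P(\psi_m) \supseteq P(\psi_{m+1})$ of Newton bodies with $\bigcap_m P(\psi_m) = P(\psi)$.

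Direction (1) $\Rightarrow$ (2) is then essentially formal: given any nice toric approximation $\{\varphi_m\}$ of $\varphi$, I would take $P := P(\varphi_1)$. Item (i) is (a) applied to $\varphi_1$; the inclusion $P(\varphi) \subseteq P$ in (ii) follows from $\varphi \le \varphi_1$ via (c), while $P + \RR^n_{\ge 0} \subseteq P$ is a defining property of every Newton body; item (iii) is the Newton-body translation of the equisingularity $\mathcal{J}(\varphi_1) = \mathcal{J}(\varphi)$ via (b).

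For direction (2) $\Rightarrow$ (1), the plan is to construct a nested decreasing family of polyhedra $\{P_m\}$ with $(2/c) P_m$ rational for the common $c$ of (i), with $P(\varphi) \subseteq P_{m+1} \subseteq P_m \subseteq P$ for all $m$, and $\bigcap_m P_m = P(\varphi)$ in Hausdorff distance. Letting $\varphi_m$ be a toric psh function with analytic singularities and Newton body $P_m$ --- for instance $\varphi_m(z) = \frac{c}{4N_m}\log\sum_j \lvert z^{N_m v_{m,j}}\rvert^2$, where $v_{m,j}$ are the rational vertices of $(2/c) P_m$ and $N_m \in \ZZ_{>0}$ clears denominators --- the family $\{\varphi_m\}$ should yield the desired nice approximation: analytic singularities follow from (a); $\varphi_m \searrow \varphi$ follows from (c) applied to $P_m \searrow P(\varphi)$; and equisingularity follows from the sandwich $\interior P(\varphi) \subseteq \interior P_m \subseteq \interior P$ combined with (iii), which collapses to equality on $\ZZ^n_{\ge 0}$.

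The convex-geometric construction of the $P_m$ is the technical heart of the argument. I would exploit density of rational normal directions in $\RR^n$: since $P(\varphi)$ is the intersection of its supporting half-spaces and every such half-space can be approximated from outside by a rational one with bound taken in $(c/2)\ZZ$ (so that the $(2/c)$-scaled version is integer), taking $P_m$ to be the intersection of $P$ with finitely many such rational half-spaces and refining at each stage should yield nestedness and Hausdorff convergence $P_m \searrow P(\varphi)$. The main obstacle is ensuring $P_m \subseteq P$ throughout the procedure, and that the associated $\{\varphi_m\}$ is pointwise decreasing to $\varphi$ modulo the bounded toric corrections allowed in (c); the former relies crucially on the rational structure of $P$ from (i), since $P$ is itself defined by rational half-spaces that can serve as default constraints in every $P_m$, while the latter is a standard consequence of $P_m \searrow P(\varphi)$. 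Once the $P_m$ are in place, condition (iii) automatically upgrades the resulting approximation to an equisingular one via the sandwich noted above.
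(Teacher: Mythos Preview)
Your argument for $(1)\Rightarrow(2)$ agrees with the paper's: $P:=P(\varphi_1)$ works for exactly the reasons you give. The problem is with $(2)\Rightarrow(1)$, and it lies in your correspondence (c).

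The Newton body $P(\psi)=\dom g_\psi^{*}$ records only \emph{where} the conjugate is finite, not the values it takes there; hence $P_m\searrow P(\varphi)$ does not force $\varphi_m\searrow\varphi$. Concretely, your $\varphi_m$ is, up to $O(1)$, associated with the convex function $g_m(x)=\max_j\langle (c/2)v_{m,j},x\rangle=\sup_{w\in P_m}\langle w,x\rangle$, whose conjugate is essentially the convex indicator of $P_m$. As $P_m\searrow P(\varphi)$ this converges to the support-type function $\widetilde g(x)=\sup_{w\in P(\varphi)}\langle w,x\rangle$, i.e.\ the biconjugate of the indicator of $P(\varphi)$. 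For a general toric $\varphi$ the conjugate $g^{*}$ is a nontrivial convex function on $P(\varphi)$, so $\widetilde g\neq g$. Put differently: two distinct toric psh functions with the same Newton body would be fed into your recipe identically and produce the \emph{same} sequence $\{\varphi_m\}$, which therefore cannot converge pointwise to both. For the same reason $P_{m+1}\subseteq P_m$ yields only $\varphi_{m+1}\le\varphi_m+O(1)$, not the pointwise inequality required for a decreasing sequence.

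The paper's fix is to lift the polyhedral approximation one dimension higher. Instead of approximating $P(\varphi)\subset\RR^n$, one writes the full epigraph $\epi(g^{*})\subset\RR^{n+1}$ as a countable intersection $\tfrac{c}{2}\bigcap_i H^+_{u_i,\alpha_i}$ of rational half-spaces and sets $\epi(g_i^{*})=(P\times\RR)\cap\tfrac{c}{2}\bigcap_{j\le i}H^+_{u_j,\alpha_j}$. This produces an \emph{increasing} sequence of piecewise-affine rational convex functions $g_i^{*}\nearrow g^{*}$ with $\dom g_i^{*}\subseteq P$, so that the characterization in Theorem~\ref{analytic_sing_equivalence} gives analytic singularities for each $\varphi_i$, while the monotone-conjugate Lemma~\ref{convconv} yields $(g_i^{*})^{*}\searrow g$ on the relative interior and hence $\varphi_i\searrow\varphi$ pointwise. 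The equisingularity step via the sandwich $P(\varphi)\subseteq P(\varphi_i)\subseteq P$ together with condition~(iii) is exactly as you outlined.
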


\noindent Here $\RR^n_{\ge 0}$(resp. $\ZZ^n_{\ge 0}$) is the set of $n$-tuples of nonnegative real numbers (resp. integers) and $P(\varphi)$ is the Newton convex body of $\varphi$ (see Definition~\ref{Newtonconvexbody} for the definition). 
Also $r = (r_1, \ldots , r_n)$ is a polyradius of a polydisk in $\CC^n$. A \textit{polyhedron} is a finite intersection of upper hyperplanes in $\RR^n$ (see Definition~\ref{H-poly}, Definition~\ref{V-poly} and Theorem~\ref{H_V_equivalence}). If all the equations of hyperplanes can be represented by rational coefficients and rational constants, we say that the polyhedron is rational.

We note that (as would be naturally expected) when $\varphi$ is of the form $\log{ \sum_{i=1}^m{ \abs{z_i}^{a_i} } }$, the condition  Theorem~\ref{mainthm1_2} implies the condition Theorem~\ref{G20_2}.
Indeed, suppose that  $\alpha = (\alpha_1, \ldots , \alpha_n)$ is a positive integer solution of $\sum\limits_{i=1}^m \frac{x_i}{a_i} = 1$. In this case, the Newton convex body of $\varphi$ is given by
$$
P(\varphi) = \left\{ \sum_{i=1}^{m} t_ia_i e_i  : t_1, \ldots, t_m \ge 0, t_1 + \cdots + t_m = 1 \right\} + \RR^n_{\ge 0}
$$
where $e_1,\ldots, e_n$ are the standard basis for $\RR^n$. Since the slope of the hyperplane $\sum_{i=1}^{n}{ \frac{x_i}{a_i} } = \frac{1}{c}$ cannot be rational for every $c > 0$, the interior of $P \cap \RR^n_{>0}$ contains $P(\varphi) \cap \RR^n_{>0}$. This implies that $\alpha$ belongs to $\interior P$, which contradicts the condition $(iii)$ of Theorem~\ref{mainthm1_2}.

For what we observed as in the condition (2'), the existence of a nice approximation seems to be guaranteed only when $(1-\epsilon)\varphi$ is equisingular to $\varphi$ for sufficiently small $\epsilon>0$. As an application of our main result, we are now able to consider the following natural question which arises from Theorem~\ref{G20}. 

\begin{question} \label{QQ}

Let $\varphi$ be a psh function. Does the following implication hold ? : 

If $\varphi$ admits a nice approximation (i.e. a decreasing equisingular approximation by psh functions with analytic singularities), then $1$ is not a jumping number of $\varphi$. 

\end{question}

We note that  this implication certainly holds for the psh functions of the diagonal type in  Theorem~\ref{G20}, in view of the above condition (2').  
As a consequence of Theorem~\ref{mainthm1}, we answer this question negatively by establishing the following result. 

\begin{corollary} \label{cor:always_approximated}
There exists a toric psh function $\varphi$ on $(\CC^n, 0)$ such that 
\begin{thmlist}
\item for some $a>0$, we have nontrivial multiplier ideal  $\mathcal{J}(a \varphi)_0 \neq \mathcal{O}_{\CC^n, 0}$,

\item for every $c>0$, the psh function $c\varphi$ admits a decreasing equisingular approximation by toric psh functions with analytic singularities. \label{cor:always_approximated_2}
\end{thmlist}
\end{corollary}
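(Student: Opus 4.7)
The plan is to exhibit an explicit toric psh function with a single irrational logarithmic factor, so that its Newton body has an irrational ``intercept'' which can be floored to the nearest integer without affecting interior lattice points. Concretely, I would set
\[
\varphi(z_1,\ldots,z_n) := \sqrt{2}\,\log|z_1| \qquad \text{on } \CC^n;
\]
this is manifestly toric, and by the formula for Newton convex bodies recalled in the introduction (applied with $m=1$, $a_1=\sqrt{2}$) one has
\[
P(\varphi) = \{\sqrt{2}\,e_1\} + \RR^n_{\ge 0} = \{x \in \RR^n_{\ge 0} : x_1 \ge \sqrt{2}\}, \qquad P(c\varphi) = cP(\varphi).
\]

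For (1), I would note that $1 \notin \mathcal{J}(\varphi)_0$: by the integrability criterion this would require $\int |z_1|^{-2\sqrt{2}}\,dV < \infty$ near $0$, and polar integration in $z_1$ reduces the question to the divergent integral $\int_0^\varepsilon r^{1-2\sqrt{2}}\,dr$ (since $\sqrt{2} > 1$). Hence $\mathcal{J}(\varphi)_0 \subsetneq \mathcal{O}_{\CC^n,0}$ and (1) holds with $a=1$.

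For (2), I would apply Theorem~\ref{mainthm1} to each rescaling $c\varphi$, using the polyhedron
\[
P_c := \{x \in \RR^n_{\ge 0} : x_1 \ge \lfloor c\sqrt{2}\rfloor\}.
\]
Condition (i) is immediate, since $P_c$ has integer-coefficient defining inequalities. Condition (ii) follows from $\lfloor c\sqrt{2}\rfloor \le c\sqrt{2}$, which gives $P(c\varphi) \subseteq P_c$, together with the obvious upper-set property $P_c + \RR^n_{\ge 0} \subseteq P_c$. For condition (iii), the interior lattice points of $P(c\varphi)$ form the set $\{m \in \ZZ^n_{\ge 0} : m_1 > c\sqrt{2},\ m_i \ge 1 \text{ for } i \ge 2\}$, while those of $P_c$ form $\{m \in \ZZ^n_{\ge 0} : m_1 > \lfloor c\sqrt{2}\rfloor,\ m_i \ge 1 \text{ for } i \ge 2\}$. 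These coincide because for any integer $m_1$ the inequalities $m_1 > c\sqrt{2}$ and $m_1 > \lfloor c\sqrt{2}\rfloor$ define the same set, whether $c\sqrt{2}$ is itself an integer or not.

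The only subtle point---and the closest thing to an obstacle---is the uniformity in $c$: one recipe must simultaneously work for \emph{every} $c > 0$. This succeeds because $c\sqrt{2}$ is always separated from the integer lattice (either equal to an integer or strictly between two consecutive ones), so flooring to $\lfloor c\sqrt{2}\rfloor$ crosses only an integer-free gap on the $x_1$-axis and never introduces a new interior lattice point. Invoking Theorem~\ref{mainthm1} then produces, for each $c$, a decreasing equisingular approximation of $c\varphi$ by toric psh functions with analytic singularities.
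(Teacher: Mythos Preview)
Your argument is technically correct for the corollary as literally stated, but your choice of example trivializes it. The function $\varphi = \sqrt{2}\log|z_1| = \tfrac{\sqrt{2}}{2}\log|z_1|^2$ already has analytic singularities in the paper's sense (take $c=\sqrt{2}/2$, $m=1$, $g_1=z_1$ in (\ref{def:psh_function})), and so does every scalar multiple $c\varphi$. Condition~(2) then follows at once from the constant sequence $\varphi_m \equiv c\varphi$; your detour through Theorem~\ref{mainthm1} is therefore superfluous, and the corollary collapses to a tautology that does not actually use the main theorem.

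The paper's proof, by contrast, takes as Newton convex body the non-polyhedral region $\{(x,y)\in\RR^2_{\ge 0}:xy\ge 1\}$ and realizes it (up to boundary) by a Siu psh function associated to a graded sequence of monomial ideals. By Theorem~\ref{analytic_sing_equivalence} this $\varphi$, and every $c\varphi$, has \emph{non}-analytic singularities, so no constant sequence will do. Verifying condition~\ref{mainthm1_2} then requires genuine work: for each $c>0$ one must produce a rational polyhedron $P$ with $P(c\varphi)\subseteq P$ and matching interior lattice points, which the paper accomplishes by intersecting (rational perturbations of) tangent half-planes to the hyperbola $xy=c$ at the finitely many lattice points near its boundary. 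This is what makes the corollary a substantive application of Theorem~\ref{mainthm1} and a meaningful negative answer to Question~\ref{QQ}, rather than the observation that a function with analytic singularities approximates itself.
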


Indeed, let $\varphi$ be as in Corollary~\ref{cor:always_approximated}. Let $\lambda$ be the log canonical threshold of $\varphi$ at $0$ (which exists by the condition (1)), i.e. the smallest jumping number of $\varphi$ at $0$. Then $\mathcal{J}(\lambda^{}\varphi)_0$ is not trivial and $\lambda^{}\varphi$ admits a nice toric approximation, which answers Question~\ref{QQ} negatively.

Finally, we have some remarks on the methods of proof. 
In the course  of the proof of our main result Theorem~\ref{mainthm1}, we also establish some results in convex analysis. Perhaps the most interesting result among these is that if the epigraph of a convex function defined on $\RR^n_{\le 0}$ is a rational polyhedron, then so is the epigraph of its convex conjugate. This result plays an important role in the construction of equisingular approximants.

Our main strategy for the proof of Theorem~\ref{mainthm1} is to consider convex conjugates of toric psh functions. We present an explicit characterization for convex functions associated to toric psh functions with analytic singularities. Then we show the relation between convex functions and their conjugates when convex functions are from toric psh functions with analytic singularities. Using this we prove the main theorem using convergence of convex conjugates.

The paper is organized as follows. In Section 2, we give preliminary results on toric psh functions and convex analysis. In Section 3, we characterize how toric psh functions with analytic singularities and their Newton convex bodies look like. We also interpret the result of Guan \cite{G20} using convex analysis related to toric psh functions. In Section 4, we observe how convex conjugates of toric psh functions with analytic singularities should behave and demonstrate relationships between the convergence of convex functions and the convergence of their conjugates. In Section 5, we prove the main theorem and present some relevant examples.\\

\noindent \textbf{Acknowledgements.} We would like to thank Dano Kim for his valuable comments and his constant support. We also would like to thank Liran Rotem for helpful discussions. The second named author was supported by Basic Science Research Program through the National Research Foundation of Korea(NRF) funded by the Ministry of Education(2020R1A6A3A01099387).

\section{Preliminaries}

In this section, we prepare some preliminary results on toric psh functions and convex analysis which are mainly covered by the references  \cite{H},  \cite{Sch} and  \cite{Gu12}.  We first recall the definition of a plurisubharmonic function. 

\begin{definition} 

 Let $U$ be an open subset of $\CC^n$.  Let $\varphi : U \to \RR \cup \{ -\infty \}$ be a function which is not identically equal to $  -\infty $ on any connected component of $U$. The function $\varphi$   is called {\bf plurisubharmonic} (psh for short) if 
 
\begin{enumerate}
  \item $\varphi$ is upper semicontinuous, and 
  \item for any complex line $L$ in $\CC^n$, the restriction of $\varphi$ to $L \cap U$ is either  subharmonic or identically equal to $  -\infty $ (on each connected component of $L \cap U$). 
\end{enumerate}
\end{definition}
 
  We refer to \cite{DX}, \cite{D10}, \cite{Ki94} for  introduction to some general properties of psh functions.
For given holomorphic functions $g_1, \ldots, g_m$ on $U$, and $c \ge 0$, one can define a psh function $\varphi$ by
\begin{equation} \label{def:psh_function}
\varphi (z) := c \log{(\abs{g_1 (z)}^2 + \cdots + \abs{g_m(z)}^2)}.
\end{equation}
which gives an example of psh functions. Moreover, if a psh function $\varphi$ is locally of the form (\ref{def:psh_function}) up to $O(1)$, we will say that $\varphi$ has {\bf analytic singularities} following the terminology of \cite{D10}.

\begin{example}\label{root}

Note that the function $\varphi$ in Theorem~\ref{G20} need not have analytic singularities when $a_i$'s are irrational, cf. \cite[Example 4.1]{K16}. For example, $\varphi(z_1,z_2) := \log{(|z_1|^{\sqrt{2}} + |z_2|^{\sqrt{3}})}$ in $\CC^2$ does not have analytic singularities but satisfies the condition (2) in Theorem~\ref{G20}. Indeed, if $g$ is the associated convex function to $\varphi$ (see Proposition~\ref{Gueconvex}), then it can be easily shown that the epigraph of $g$ cannot be a rational polyhedron even if one restrict the domain of $g$ to be a rational polyhedron. In view of Lemma~\ref{epigraph_polyhedron_maxaffine}, $\varphi$ does not have analytic singularities. Furthermore, since every $\QQ$-linear combination of $\sqrt{2}$ and $\sqrt{3}$ cannot be a nonzero integer, $\varphi$ satiesfies the condition of Theorem~\ref{G20_2}. Therefore $\varphi$ has a nice approximation near $0$.

\end{example}

  Now we turn to toric psh functions.

\begin{definition}
Let $\varphi$ be a psh function defined on a  polydisk $D(0,r) \subset \CC^n$ with polyradius $(r, \ldots , r)$ where $r >0$. We will say $\varphi$ is  {\bf toric} if $\varphi$ is invariant under the torus action, i.e. $\varphi(z_1 , \ldots , z_n)= \varphi(e^{i\theta_1}z_1 , \ldots , e^{i\theta_n}z_n)$ where each $\theta_j$ is an arbitrary real number for $1 \le j \le n$.
\end{definition}

 See also e.g. \cite{R13}, \cite{Gu12}, \cite{KR18} for some more information on toric psh functions.
We recall the following result from \cite{DX}, cf. \cite[Proposition 1.11]{Gu12}. 

\begin{proposition}  \label{Gueconvex}
 Let $\varphi$ be a toric psh function defined on $D(0,r)$. Then there exists a convex function $g$ defined on $(-\infty, \log r)^n$, increasing in each variable, such that for all $z \in D(0,r)$, we have $\varphi(z)=g(\log \abs{z_1} , \ldots , \log \abs{z_n})$.
\end{proposition}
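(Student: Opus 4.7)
The plan is to pull $\varphi$ back along the exponential map and then invoke the classical principle that a plurisubharmonic function on a tube domain which is invariant under imaginary translations must be convex in the real coordinates.

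First I would define $g : (-\infty, \log r)^n \to \RR \cup \{-\infty\}$ by
$$g(x_1, \ldots, x_n) := \varphi(e^{x_1}, \ldots, e^{x_n}),$$
which is well-defined because each $e^{x_j}$ lies in $(0, r)$. The toric invariance of $\varphi$ ensures the identity $\varphi(z) = g(\log \abs{z_1}, \ldots, \log \abs{z_n})$ for every $z \in D(0, r)$ with all $z_j \ne 0$; the values on the coordinate axes are then forced by the upper semicontinuity of $\varphi$.

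Second, I would prove convexity. Consider the tube domain $T := \{ w \in \CC^n : \operatorname{Re} w_j < \log r \text{ for each } j \}$ and the holomorphic map $\Phi : T \to D(0,r)$, $\Phi(w) := (e^{w_1}, \ldots, e^{w_n})$. Then $\tilde\varphi := \varphi \circ \Phi$ is psh on $T$, and the torus invariance of $\varphi$ yields $\tilde\varphi(w + i\theta) = \tilde\varphi(w)$ for all $\theta \in \RR^n$, so $\tilde\varphi(w) = g(\operatorname{Re} w)$. Given any two points $x, y \in (-\infty, \log r)^n$, the one-variable function $\zeta \mapsto \tilde\varphi\bigl(x + \zeta(y - x)\bigr)$ on a neighborhood of $[0, 1] \subset \CC$ is subharmonic and depends only on $\operatorname{Re} \zeta$; a subharmonic function of one complex variable depending only on the real part is automatically a convex function of that real part. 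Restricting $\zeta = t \in [0,1]$ yields convexity of $g$ along the segment from $x$ to $y$.

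Third, for monotonicity, fix an index $j$ and values $x_k$ for $k \ne j$, and put $u(\zeta) := \varphi(e^{x_1}, \ldots, \zeta, \ldots, e^{x_n})$ with $\zeta$ placed in the $j$-th slot. Then $u$ is subharmonic on the disk $\{\abs{\zeta} < r\}$ and, by toric invariance, depends only on $\abs{\zeta}$. A radial subharmonic function on a disk is a nondecreasing function of $\log \abs{\zeta}$, which follows by applying the sub-mean value inequality to concentric circles. Translating back, $g$ is nondecreasing in $x_j$, and since $j$ was arbitrary, $g$ is increasing in each variable. The only substantive ingredient here is the convexity step; the monotonicity and well-definedness are essentially formal consequences of toric invariance together with the one-variable theory of subharmonic functions.
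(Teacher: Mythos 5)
The paper does not give its own proof of this proposition; it is cited from Demailly's book and from Guenancia's paper, where the argument is essentially what you describe: pull back by the exponential, use that a psh function constant in the imaginary directions is convex in the real coordinates of the tube domain, and obtain monotonicity from radial subharmonicity on discs. Your write-up is correct in all three steps, and the convexity argument via $\zeta \mapsto \tilde\varphi(x+\zeta(y-x))$, which depends only on $\operatorname{Re}\zeta$ and hence is convex in $\operatorname{Re}\zeta$, is exactly the standard mechanism. The only slightly imprecise remark is the claim that the identity on the coordinate axes is ``forced by upper semicontinuity''; more accurately, for a rotation-invariant subharmonic function $u$ on a disc one has $u(0)=\lim_{r\to 0^+}u(r)$ because the circular mean of $u$ both equals $u(r)$ (radiality) and converges to $u(0)$ (a standard property of subharmonic functions), and this is precisely the mechanism that pins down the value at the axis. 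With that small clarification the proof is complete and consistent with the sources the paper cites.
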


The papers \cite{R13},  \cite{Gu12}  proved independently a characterization of multiplier ideal for toric psh functions in the language of convex analysis. For this, we first define the Newton convex body related to a convex function.

 \begin{definition}
   Let $g : \RR^n \longrightarrow (-\infty, +\infty]$ be a convex function not identically $+\infty$. We define the {\bf convex conjugate} $g^{*} : \RR^n \longrightarrow (-\infty, +\infty]$ by $$
   g^{*}(x) := \displaystyle\sup_{y\in \RR^n} \left( \langle x,y \rangle -g(y) \right).
   $$
 \end{definition}
 \begin{rmk} \label{properties_cvxcj}
   We mention some properties of $g^{*}$. Let $g$ be a convex function defined on $\RR^n$.
 \begin{remlist}
 \item The convex conjugate $g^{*}$ of $g$ is also a convex function.
 \item If $g$ is increasing in each variable, then $g^{*}$ is decreasing in each variable, and vice versa.
 \item The {\bf biconjugate} $g^{**}$ of $g$ is always lower semicontinuous and $g^{**} \le g$. Moreover, $g^{**}=g$ if and only if $g$ is lower semicontinuous. \label{properties_cvxcj_3}
 \end{remlist}
 The first two statements are straightforward and see \cite[Chapter 2]{H} for the last property.
 \end{rmk}

\begin{rmk} \label{restrict_domain}
  Let $g$ be a convex function on $\RR^n_{<0} := \{ (x_1,\ldots, x_n) : x_1,\ldots, x_n < 0 \}$ which is increasing in each variable. For a positive real number $r$ and $x \in \RR^n$, we have the following inequality:
\begin{equation*}
\begin{split}
\sup_{y \in I_r^n}{ (\inp{x,y} - g(y)) } & \le \sup_{y \in \RR^n_{\le 0}}{ ( \inp{x,y} - g(y) )}   \\
&= \sup_{y \in \RR^n_{\le 0}}{ (\inp{x,y-\mathbf{r}} - g(y-\mathbf{r}) + \inp{x,\mathbf{r}} - g(y) + g(y-\mathbf{r}) )} \\
&\le \sup_{y \in I_r^n}{ (\inp{x,y} - g(y)) } + \inp{x,\mathbf{r}}
\end{split}
\end{equation*}
where $I_r = (-\infty, -r)$ is an open interval in $\RR$ and $\mathbf{r} = (r,\ldots, r) \in \RR^n$. This shows that shrinking the domain of a convex function to $I_{r}^n$ for an arbitrary small $r>0$ does not affect its Newton convex body.
\end{rmk}

 Now we are ready to define the Newton convex bodies of toric psh functions.

\begin{definition} \label{Newtonconvexbody}
Let $g$ be a convex function on $U=(-\infty, \log r)^n$. We denote by $P(g)$ the domain of the convex conjugate of $g$. In other words,
$$
x \in P(g) \iff \sup_{y \in U}(\inp{x,y} - g(y)) < + \infty.
$$
We call $P(g)$ the {\bf Newton convex body} of $g$. If $\varphi$ is a toric psh function on $D(0,r)$ and $g$ is the convex function associated to $\varphi$, we write $P(\varphi) = P(g)$ and call $P(\varphi)$ the Newton convex body of $\varphi$.
\end{definition}

 By Remark~\ref{restrict_domain}, we know that $P(\varphi)$ depends only on the germ of $\varphi$ at $0$. As we mentioned in the introduction, equisingularity of two psh functions is defined in terms of multiplier ideal sheaves.

\begin{definition}
Let $\varphi$ be a psh function defined on a complex manifold $X$. The {\bf multiplier ideal sheaf} $\mathcal{J}(\varphi)$ of $\varphi$ is the ideal sheaf of the sheaf of holomorphic functions on $X$ whose germ at each $x \in X$ is equal to
$$
\left\{ f \in \mathcal{O}_x : {\abs{f}}^2 e^{-2\varphi} \ \text{is integrable with respect to the Lebesgue measure near} \ x \right\}.
$$

\end{definition}

 Also we will say that $\varphi$ and $\psi$ are {\bf equisingular} if $\mathcal{J}(\varphi) = \mathcal{J} (\psi)$.  
 
For a monomial ideal $\mathfrak{a} \subset \CC[x_1,\ldots, x_n]$, a relation between $\mathcal{J}(\mathfrak{a})$ and the Newton polyhedron of $\mathfrak{a}$ was established by Howald \cite{H01}. This relation was extended to the case of toric psh functions and their multiplier ideals, which help us to determine whether two psh functions are equisingular or not using convex analysis only.
\begin{theorem} \label{Guemulti}
\cite[Theorem 1.13]{Gu12} \cite[Proposition 3.1]{R13} Let $\varphi$ be a toric psh function defined on $D(0,r) \subset \CC^n$. Then the multiplier ideal $\mathcal{J}(\varphi) $ is a monomial ideal and we have:
$$ z^{\alpha} \in \mathcal{J}(\varphi) \iff \alpha + \mathbf{1} \in \mathrm{int } (P(\varphi)).$$
\end{theorem}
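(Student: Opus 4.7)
The plan is to argue in three stages: first, reduce to the monomial case using the torus invariance; second, convert the integrability defining the multiplier ideal into a question about a concave exponent on $(-\infty,\log r)^n$; third, relate that integrability to membership in $\interior P(\varphi)$ by convex duality.

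First, I would check that $\mathcal{J}(\varphi)$ is monomial. Because $\varphi$ is invariant under the $n$-torus $(S^1)^n$, pulling back by any torus element preserves the multiplier ideal. Given a germ $f=\sum_\alpha a_\alpha z^\alpha\in\mathcal{J}(\varphi)_0$, each coefficient can be recovered by averaging $(e^{i\theta_1},\ldots,e^{i\theta_n})\cdot f$ against $e^{-i\inp{\alpha,\theta}}$ over $(S^1)^n$; since $\mathcal{J}(\varphi)_0$ is closed under this averaging (a uniform limit of ideal elements on a compact polydisk), every monomial appearing in $f$ lies in $\mathcal{J}(\varphi)_0$.

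Next, I would compute when $z^\alpha\in\mathcal{J}(\varphi)_0$. Using polar coordinates $z_j=r_je^{i\theta_j}$ on a small polydisk $D(0,\rho)$ and integrating out the angular variables, the condition $\int\abs{z^\alpha}^2e^{-2\varphi}\,d\lambda<\infty$ becomes
\[
\int_{(0,\rho)^n}\prod_j r_j^{2\alpha_j+1}\,e^{-2g(\log r_1,\ldots,\log r_n)}\,dr_1\cdots dr_n<\infty.
\]
Substituting $t_j=\log r_j$ yields, up to a constant,
\[
I(\alpha):=\int_{(-\infty,\log\rho)^n} e^{2(\inp{\alpha+\mathbf{1},t}-g(t))}\,dt,
\]
so the question reduces to finiteness of $I(\alpha)$, where the exponent is a concave function of $t$.

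For the $(\Leftarrow)$ direction, if $\alpha+\mathbf{1}\in\interior P(\varphi)$, then since $P(\varphi)+\RR^n_{\ge 0}\subseteq P(\varphi)$ and $P(\varphi)\subseteq\RR^n_{\ge 0}$ (both follow because $g$ is increasing in each variable), one can choose $\delta>0$ with $\alpha+\mathbf{1}-\delta\mathbf{1}\in P(\varphi)$. By the definition of $P(\varphi)$ as the domain of $g^*$, there is $C$ so that $g(t)\ge\inp{\alpha+\mathbf{1}-\delta\mathbf{1},t}-C$ for all $t$ in the domain. Substituting into the exponent gives $\inp{\alpha+\mathbf{1},t}-g(t)\le\delta(t_1+\cdots+t_n)+C$, so the integrand is dominated by $e^{2C}e^{2\delta(t_1+\cdots+t_n)}$, which is integrable on $(-\infty,\log\rho)^n$; hence $I(\alpha)<\infty$.

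For the $(\Rightarrow)$ direction — the main obstacle — I would separate two cases. If $\alpha+\mathbf{1}\notin P(\varphi)$ the sup $\sup_t(\inp{\alpha+\mathbf{1},t}-g(t))=+\infty$; using convexity of $g$ and the translation invariance noted in Remark~\ref{restrict_domain}, one can find a direction $v\in\RR^n_{\le 0}$ along which $\inp{\alpha+\mathbf{1},t+sv}-g(t+sv)\to+\infty$ with controlled growth, forcing divergence of $I(\alpha)$ by Fubini along a cone. The genuinely delicate subcase is when $\alpha+\mathbf{1}$ lies on $\partial P(\varphi)$: here I would take a supporting hyperplane at $\alpha+\mathbf{1}$ with outward normal $v\in\RR^n_{\le 0}$ (nonzero because $\alpha+\mathbf{1}\notin\interior P(\varphi)$), note that $\alpha+\mathbf{1}+sv\notin P(\varphi)$ for $s>0$, and exploit concavity of $h(t)=\inp{\alpha+\mathbf{1},t}-g(t)$ together with the monotonicity of $g$ to show that $h$ grows at least linearly along a half-space of positive measure, again forcing $I(\alpha)=+\infty$. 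Combining the cases yields the equivalence, completing the proof.
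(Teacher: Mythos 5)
This theorem is cited in the paper from \cite[Theorem~1.13]{Gu12} and \cite[Proposition~3.1]{R13} without proof, so there is no in-paper argument to compare against; I am reviewing your proposal on its own merits.

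Your reduction is the standard and correct one: torus averaging shows $\mathcal{J}(\varphi)$ is monomial, and passing to polar coordinates and then $t_j=\log r_j$ converts the integrability condition for $z^\alpha$ into the finiteness of $\int_{(-\infty,\log\rho)^n} e^{2h(t)}\,dt$ with $h(t)=\inp{\alpha+\mathbf{1},t}-g(t)$ concave. The $(\Leftarrow)$ direction is carried out correctly: interiority gives $\alpha+\mathbf{1}-\delta\mathbf{1}\in P(\varphi)$, hence the affine minorant $g(t)\ge\inp{\alpha+\mathbf{1}-\delta\mathbf{1},t}-C$, hence the domination of the integrand by $e^{2C}e^{2\delta(t_1+\cdots+t_n)}$.

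The genuine gap is in the boundary subcase of $(\Rightarrow)$. You assert that when $\alpha+\mathbf{1}\in\partial P(\varphi)$, one can ``show that $h$ grows at least linearly along a half-space of positive measure.'' This is false in the simplest example: take $n=1$, $\varphi=\log\abs{z}$, $\alpha=0$. Then $g(t)=t$, $P(\varphi)=[1,\infty)$, $\alpha+\mathbf{1}=1\in\partial P(\varphi)\cap P(\varphi)$, and $h(t)=t-g(t)\equiv 0$ is bounded above (as it must be whenever $\alpha+\mathbf{1}\in\dom g^*$), so it does not grow at all. Divergence of $I(\alpha)$ here comes from $h$ failing to \emph{decay} to $-\infty$ fast enough, not from growth. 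The correct statement, which you should substitute, is that $h$ is \emph{bounded below along rays of infinite measure}. Concretely: with $v\in\RR^n_{\le 0}\setminus\{0\}$ an outward normal to $P(\varphi)$ at $\alpha+\mathbf{1}$, the identity $g'_\infty(v)=\sigma_{\dom g^*}(v)$ between the recession function of $g$ and the support function of $\dom g^*$ gives $\inp{\alpha+\mathbf{1},v}\ge g'_\infty(v)$; hence the concave function $s\mapsto h(t_0+sv)$ has nonnegative asymptotic slope, which for a concave function forces it to be nondecreasing in $s$, so $h(t_0+sv)\ge h(t_0)$ for all $s\ge 0$, and Fubini along the cone $\{t_0+sv : t_0\in O,\ s\ge 0\}$ gives $I(\alpha)=+\infty$. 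This same recession-function argument actually unifies both of your subcases (strict inequality $\inp{\alpha+\mathbf{1},v}>g'_\infty(v)$ when $\alpha+\mathbf{1}\notin\overline{P(\varphi)}$, equality when on the boundary), so you can avoid the case split entirely.
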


Since we are considering psh singularities of psh germs, we may assume every toric psh function $\varphi$ is defined on a open set near the origin. Furthermore, we may restrict the domain of $\varphi$ to a closed polydisk centered at the origin with positive polyradius when we take its associated convex function. Then its associated convex function is lower semicontinuous and thus, by Remark \ref{properties_cvxcj},  $\varphi$ has the equality with its biconjugate after shdrinking the domain of $\varphi$ to a smaller open polydisk. For its conjugate, we also consider only $g^{*}$ restricted to the interior of $P(\varphi)$ so that $g^{*}$ is a continuous convex function decreasing in each variable.

\section{Newton convex bodies for analytic singularities}
 In this section, we will prove the following characterization of psh function with analytic singularities and what convex conjugates of toric psh functions with analytic singularities look like.
 \begin{proposition} \label{poles_toric}
   Let $\varphi$ be a toric psh funtion with analytic singularities on a unit polydisk $D(0,1) \subseteq \mathbf{C}^n$. Then $\varphi$ is associated to a monomial ideal with weight $c \in \RR_{>0}$, i.e. $\varphi \simeq \displaystyle\frac{c}{2} \log ({\abs{z}^{2\alpha_1}} + \cdots + {\abs{z}^{2\alpha_m}})$ near $0$ where $\alpha_1, \ldots, \alpha_m$ are multi-indices and $\simeq$ means that their difference is in $\bigo(1)$.
 \end{proposition}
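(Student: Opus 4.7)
The plan is to combine a torus-averaging argument with Dickson's Lemma to reduce the abstract analytic singularities of $\varphi$ to an explicit finite monomial sum.

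First, by the definition of analytic singularities, write $\varphi = c'\log\bigl(\sum_{i=1}^m |g_i|^2\bigr) + h$ near $0$, where $c' > 0$, the $g_i$ are holomorphic on the unit polydisk, and $h$ is locally bounded. Expand each $g_i$ in its Taylor series $g_i(z) = \sum_\alpha c_{i,\alpha} z^\alpha$ and set $d_\alpha := \sum_{i=1}^m |c_{i,\alpha}|^2$. Orthogonality of characters on the torus $[0,2\pi]^n$ yields
\[
F(z) \;:=\; \sum_\alpha d_\alpha |z|^{2\alpha} \;=\; \frac{1}{(2\pi)^n} \int_{[0,2\pi]^n} \sum_{i=1}^m \bigl|g_i(z_1 e^{i\theta_1}, \ldots, z_n e^{i\theta_n})\bigr|^2 \, d\theta,
\]
a function depending only on $|z|$.

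Next, I would use toric invariance to convert this average into a two-sided comparison. Since $\varphi(z) = \varphi(z e^{i\theta})$, the equation displaying $\varphi$ at $z$ and at $z e^{i\theta}$ gives $\bigl|\log \sum_i |g_i(ze^{i\theta})|^2 - \log \sum_i |g_i(z)|^2\bigr| \le 2\|h\|_\infty / c'$ uniformly in $\theta$ for $z$ in a small polydisk around $0$. Consequently the ratio $\sum_i |g_i(z e^{i\theta})|^2 / \sum_i |g_i(z)|^2$ is uniformly bounded above and below, and integrating over $\theta$ yields $F(z) \asymp \sum_i |g_i(z)|^2$. Therefore $\varphi(z) = c' \log F(z) + O(1)$ near $0$.

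Finally, I would reduce $F$ to a finite monomial sum. Let $S := \{\alpha \in \ZZ^n_{\ge 0} : d_\alpha > 0\}$ and let $S_{\min}$ be its set of component-wise minimal elements, which is finite by Dickson's Lemma. For each $\alpha \in S$ pick $\pi(\alpha) \in S_{\min}$ with $\pi(\alpha) \le \alpha$. On $\{|z_j| \le 1/2\}$, absolute convergence of $\sum_\alpha d_\alpha (1/2)^{2|\alpha|}$ (from holomorphicity of the $g_i$) combined with the estimate $|z|^{2\alpha} \le (1/2)^{2|\alpha - \pi(\alpha)|}\, |z|^{2\pi(\alpha)}$ produces finite constants $C_{\alpha'}$ with $F(z) \le \sum_{\alpha' \in S_{\min}} C_{\alpha'} |z|^{2\alpha'}$, while the reverse inequality $F(z) \ge \min_{\alpha' \in S_{\min}} d_{\alpha'} \cdot \sum_{\alpha' \in S_{\min}} |z|^{2\alpha'}$ is immediate. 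Setting $c := 2 c'$ and enumerating $S_{\min} = \{\alpha_1, \ldots, \alpha_m\}$ yields $\varphi \simeq \frac{c}{2}\log \sum_{j=1}^m |z|^{2\alpha_j}$, as required.

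The hard part is the second step, where I must upgrade the one-sided Jensen inequality $\log F(z) \ge \frac{1}{(2\pi)^n}\int \log\sum_i|g_i(ze^{i\theta})|^2\, d\theta$ to a two-sided comparison $F \asymp \sum_i|g_i|^2$. This is precisely where the toric hypothesis enters in an essential way: for a non-toric $\varphi$ the logarithmic oscillation in $\theta$ need not be bounded, so the Jensen gap cannot be controlled and the passage from arbitrary holomorphic generators to monomial generators fails.
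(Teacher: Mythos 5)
Your proof is correct and takes a genuinely different route from the paper's. The paper proceeds by induction on the dimension $n$: the base case $n=1$ is handled by factoring out the vanishing order at $0$ and invoking the maximum principle together with torus invariance of the zero set; for $n\ge 2$, after removing a common monomial factor, $\varphi$ is restricted to each coordinate hyperplane $H_j$, the inductive hypothesis is applied to the restrictions, and then a rearrangement inequality (Lemma~\ref{rearrangeineq}) and a maximum-principle argument control the ratio of $\sum|g_i|^2$ to the resulting monomial sum from above and below near the boundary strata. You avoid induction altogether. Your torus-averaging (Parseval) identity expresses the monomial majorant $F(z)=\sum_\alpha d_\alpha|z|^{2\alpha}$ as the angular mean of $\sum_i|g_i(ze^{i\theta})|^2$, and torus invariance of $\varphi$ forces the logarithmic oscillation in $\theta$ to be bounded by $2\|h\|_\infty/c'$ on a small torus-invariant polydisk, so averaging only costs $O(1)$; this gives $F\asymp\sum_i|g_i|^2$. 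You then use Dickson's Lemma to replace the full Taylor support by its finite set of componentwise-minimal exponents $S_{\min}$, and your upper bound (using $|z_j|\le 1/2$ and absolute convergence of $\sum d_\alpha(1/2)^{2|\alpha|}$) together with the trivial lower bound give $F\asymp\sum_{\alpha'\in S_{\min}}|z|^{2\alpha'}$. The paper's approach is more geometric and fits the accompanying discussion of torus-invariant varieties; your approach is shorter, avoids the somewhat delicate boundary estimates in the induction step, isolates precisely where the toric hypothesis enters (the uniform boundedness of $\log\sum|g_i(ze^{i\theta})|^2-\log\sum|g_i(z)|^2$), and as a bonus identifies the resulting monomial exponents explicitly as the minimal elements of the support of $\sum d_\alpha z^\alpha \bar z^\alpha$.
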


 \begin{rmk} \label{pole_toricaction}
   For a toric plurisubharmonic function $\varphi$ with analytic singularities, if we write $$\varphi = \displaystyle\frac{c}{2}\log ({\lvert g_1 \rvert}^2 + \cdots +{\lvert g_r \rvert}^2) + \bigo(1)$$ near $0$, then it is certainly not the case that the value of ${\lvert g_1 \rvert}^2 + \cdots + {\lvert g_r \rvert}^2$ is independent of torus action. However, the vanishing of ${\lvert g_1 \rvert}^2 + \cdots + {\lvert g_r \rvert}^2$ is invariant under torus actions.
 \end{rmk}

 \begin{proof}
 Write
 $$
 \varphi = \frac{c}{2}\log ({\lvert g_1 \rvert}^2 + \cdots +{\lvert g_r \rvert}^2) + \bigo(1)
 $$
 near $z=0$. We use induction on $n$. Consider the case $n=1$. Suppose that $g_1, \ldots g_r$ have a common zero at $0$ with multiplicity $k$. Then we may assume $\abs{g_1}^2 + \cdots +\abs{g_r}^2$ is nonvanishing at $0$ by extracting $\abs{z}^{2k}$. If $\abs{g_1}^2 + \cdots +\abs{g_r}^2$ vanish at some point $z_0 \neq 0$, by Remark \ref{pole_toricaction}, it vanishes on the circle $\lvert z \rvert = \lvert z_0 \rvert$. By the maximum principle $\abs{g_1}^2 + \cdots +\abs{g_r}^2$ vanishes on the disk $D(0,\lvert z_0 \rvert)$, which is a contradiction. Thus $\abs{g_1}^2 + \cdots + \abs{g_r}^2$ is nowhere vanishing. In particular, it is bounded below by some positive number $C$ on some locally compact neighborhood of $0$. So, we can always write $\varphi = \frac{1}{2} \log \abs{z}^{2k} + \bigo(1)$ near $0$.

 Now, suppose $n \geq 2$. We introduce some auxiliary notations for convenience:
\begin{enumerate}
\item $H_j$ is the hyperplane in $\CC^n$ defined by $z_j$.
\item $z(i)^{\alpha(i)}$ is a monomial of $z_1 , \cdots , \widehat{z_i}, \cdots z_n$ with the multi-index exponent $\alpha(i)$.
\end{enumerate}
If the common zero set of $g_1 , \ldots , g_r$ contains all $H_j$($1 \leq j \leq n$), then similarly, one can extract $z^\alpha$ where $\alpha$ is a multi-index from all $g_1 , \ldots , g_r$ so that $\abs{g_1}^2 + \cdots + \abs{g_r}^2$ does not vanish identically on $H_j$ for each $1 \le j \le n$. So, we may assume that $\{ g_1 \ldots , g_r \}$ has no common factor which is a nontrivial monomial. Now if we restrict $\varphi$ on $H_j$, by the induction hypothesis,
 $$\restr{\varphi}{H_j} = \displaystyle\frac{c}{2} \log \left( \abs{ \restr{g_1}{H_j}}^2 + \cdots + \abs{ \restr{g_r}{H_j}}^2 \right) + \bigo(1) \simeq \displaystyle\frac{c}{2}\log \left({\lvert {z(j)}^{\alpha(j,1)} \rvert}^2 + \cdots +{\lvert {z(j)}^{\alpha(j,m_j)} \rvert}^2 \right).
 $$
 If we put
 $$
 h_j(z(j)) = \displaystyle\frac{{\lvert \restr{g_1}{H_j} \rvert}^2 + \cdots + {\lvert \restr{g_r}{H_j} \rvert}^2}{{\lvert {z(j)}^{\alpha(j,1)} \rvert}^2 + \cdots + {\lvert {z(j)}^{\alpha(j,m_j)} \rvert}^2},
 $$
 then it is nowhere vanishing, well-defined positive-valued function on $H_j$. In particular, it is bounded below by some positive number $C_j>0$. Let $C'$ be the minimal number among $C_1 , \ldots , C_n$.

 We can argue as above procedure for all $j$ and obtain the set $S$ of monomials by joining all such $z(j)^{\alpha(j, i_j)}$. Here, $1\le j \le n$ and $1\le i_j \le m_j$. We may regard such $\alpha(i,i_j)$ as a multi-index in $n$ variables inserting $0$ for $i$-th component which is the excluded index while we were restricting to the hyperplane $H_i$. So, we may re-index such messy notations by $z^{\beta_1} , \cdots , z^{\beta_l}$. Now it suffices to show the following equality:
 \begin{equation}
      \varphi = \displaystyle\frac{c}{2}\log \left( {\lvert z^{\beta_1} \rvert}^2 + \cdots + {\lvert z^{\beta_l} \rvert}^2 \right)
  \end{equation} up to $\bigo(1).$
Indeed, since every torus-invariant subvariety of $D(0,1)$ is given by an intersection of hyperplanes and $Z(g_1 , \ldots , g_r)$ does not have any codimension $1$ irreducible components, we know that $\varphi$ itself has a pole set of codimension $\geq 2$. We now observe the function $$h(z) = \displaystyle\frac{{\lvert g_1 \rvert}^2 + \cdots + {\lvert g_r \rvert}^2}{{\lvert {z}^{\beta_1} \rvert}^2 + \cdots + {\lvert {z}^{\beta_l} \rvert}^2}.$$ If it has a pole at some point $\eta$, then $\eta$ should be in some $H_j$. But on $H_j$, $h(z) \leq h_j(z)$ and $h_j(z)$ cannot blow up at $\eta$. Thus it is well-defined. Again, using similar argument with the case $n=1$, depending upon the maximum principle and Remark \ref{pole_toricaction}, we know that $h$ cannot vanish at $w$ where all $w_i$ are nonzero.
 Now it is enough to check that if $w_i=0$ for some $i$, then $h(w) \geq \displaystyle\frac{C}{n}\min_{w_j=0}(h_j(w)) \ge \frac{{C}{C^\prime}}{n}$ for some $C >0$. This follows immediately from Lemma~\ref{rearrangeineq}. Therefore, $h$ is bounded below by some positive lower bound near $0$.
 \end{proof}
 \begin{lemma} \label{rearrangeineq}
   Let $a$ and $b_i$ $(1 \le i \le n)$ be positive real numbers such that $b_1 \le \cdots \le b_n$. Then
   $$
   \frac{a}{b_1 + \cdots + b_n} \ge \frac{C}{n}\min_{1\le i \le n}{\frac{a}{b_i}}, \ \ \text{where} \ \ C={\left( \frac{b_1}{b_n} + \cdots + \frac{b_n}{b_1} \right)}^{-1}.
   $$
 \end{lemma}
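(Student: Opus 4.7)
The plan is to reduce the claim to a trivial numerical inequality in $b_1,\ldots,b_n$ and then exploit the specific form of $C$. First I would observe that, because the $b_i$ are arranged in nondecreasing order, $b_n$ is the largest, and consequently
\[
\min_{1 \le i \le n} \frac{a}{b_i} \;=\; \frac{a}{b_n}.
\]
Since $a>0$ appears as a common factor on both sides, the inequality to be proved is equivalent to the purely numerical bound
\[
\frac{n\, b_n}{b_1 + \cdots + b_n} \;\ge\; C,
\]
where $C = \bigl(\tfrac{b_1}{b_n} + \cdots + \tfrac{b_n}{b_1}\bigr)^{-1}$.

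Next I would verify that $C \le 1$ for a very elementary reason: the sum $\tfrac{b_1}{b_n} + \cdots + \tfrac{b_n}{b_1}$ contains the term $\tfrac{b_n}{b_1} \ge 1$ (as $b_1$ is the smallest and $b_n$ the largest) together with other strictly positive summands, so it is at least $1$, and hence $C \le 1$. Simultaneously, $b_i \le b_n$ for every $i$ gives $b_1 + \cdots + b_n \le n\,b_n$, which rearranges to $\tfrac{n b_n}{b_1 + \cdots + b_n} \ge 1$. Combining the two estimates yields $\tfrac{n b_n}{b_1 + \cdots + b_n} \ge 1 \ge C$, completing the proof.

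I do not anticipate any real obstacle: the lemma is an elementary quantitative estimate whose only role in the argument is to certify the existence of \emph{some} positive lower bound for the auxiliary function $h$ appearing in the proof of Proposition~\ref{poles_toric}. The explicit value of $C$ in the statement is just a convenient (and deliberately crude) choice, and the bound above shows that even the weaker constant $C=1$ would already do.
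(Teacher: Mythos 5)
Your proof is correct, and it is genuinely simpler than the paper's. The paper invokes Chebyshev's sum inequality (stated under the name ``rearrangement inequality'') with the two monotone sequences $x_i = b_i$ and $y_i = a/b_{n+1-i}$, obtaining
\[
\Bigl( \tfrac{b_1}{b_n} + \cdots + \tfrac{b_n}{b_1} \Bigr)\, n a \;\ge\; (b_1 + \cdots + b_n)\Bigl( \tfrac{a}{b_1} + \cdots + \tfrac{a}{b_n} \Bigr),
\]
and then bounds the harmonic-type sum on the right from below by $\min_i a/b_i$. You instead bypass any inequality with a name: you identify $\min_i a/b_i = a/b_n$, reduce to $n b_n / (b_1+\cdots+b_n) \ge C$, and observe that the left side is always $\ge 1$ while the right side is always $\le 1$ (since the sum defining $C^{-1}$ contains the term $b_n/b_1 \ge 1$). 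This is strictly more elementary, and your closing remark is apt: it shows the lemma as stated is needlessly lossy, as the constant $C=1$ already satisfies the inequality (in fact one can check $C^{-1} \ge n$ by AM--GM on the pairs $b_i/b_{n+1-i} + b_{n+1-i}/b_i \ge 2$, so the paper's $C$ is at most $1/n$). Since the lemma is only used to certify \emph{some} positive lower bound for the function $h$ in the proof of Proposition~\ref{poles_toric}, either constant serves, but your observation would mildly simplify that surrounding argument as well.
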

 \begin{proof}
 It is straightforward from the rearrangement inequality :
 $$
 \left( \frac{b_n}{b_1} + \cdots + \frac{b_1}{b_n}\right)na \ge (b_1 + \cdots + b_n)\left( \frac{a}{b_1} + \cdots + \frac{a}{b_n}\right).
 $$
 In fact, we can take two increasing sequences by $x_i = b_i$ and $y_i = a/b_{n+1-i}$ for $1 \le i \le n$. Then $n (x_ny_n + \cdots +x_1y_1) \ge (x_1 + \ldots x_n)(y_1 + \ldots y_n)$.
 \end{proof}

 Using Proposition \ref{poles_toric}, we obtain a useful characterization for toric psh with analytic singularities.
 \begin{corollary} \label{cvxcj_analsing}
   If $\varphi$ is a toric psh with analytic singularities on $D(0,1)$, written as
 $$
 \varphi = \frac{c}{2}\log{(\abs{z}^{2b_1} + \cdots + \abs{z}^{2b_r})} + \bigo(1),
 $$
and if $g$ is the convex function associated to $\varphi$ defined on $\RR_{< 0}^n$, then $g$ is of the form $\displaystyle c \max_{1 \le i \le r} \langle b_i, x \rangle$ up to $\bigo(1)$.
 \end{corollary}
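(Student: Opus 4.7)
The plan is to deduce this corollary directly from Proposition~\ref{poles_toric} together with the standard log-sum-exp estimate, with no substantial new input required. By Proposition~\ref{poles_toric}, in a neighborhood of $0$ we may write
$$\varphi(z) = \frac{c}{2}\log\bigl(\abs{z}^{2b_1} + \cdots + \abs{z}^{2b_r}\bigr) + \bigo(1).$$
Since $\varphi$ is toric and $\varphi(z) = g(\log\abs{z_1},\ldots,\log\abs{z_n})$ by Proposition~\ref{Gueconvex}, substituting $x_j = \log\abs{z_j}$ turns each $\abs{z}^{2b_i}$ into $e^{2\inp{b_i,x}}$. Consequently, on a region of the form $(-\infty,-R)^n$ the associated convex function satisfies
$$g(x) = \frac{c}{2}\log\Bigl(\sum_{i=1}^r e^{2\inp{b_i,x}}\Bigr) + \bigo(1).$$

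Next I would invoke the elementary two-sided bound
$$\max_{1\le i \le r} y_i \;\le\; \log\sum_{i=1}^r e^{y_i} \;\le\; \max_{1\le i \le r} y_i + \log r,$$
valid for all reals $y_1,\ldots,y_r$ and obtained from the trivial inequalities $e^{\max_i y_i} \le \sum_i e^{y_i} \le r\,e^{\max_i y_i}$. Applying this with $y_i = 2\inp{b_i,x}$ yields
$$\frac{c}{2}\log \sum_{i=1}^r e^{2\inp{b_i,x}} \;=\; c\max_{1\le i \le r}\inp{b_i,x} + \bigo(1),$$
where the remainder is in fact uniformly bounded by $\tfrac{c\log r}{2}$ on all of $\RR^n$.

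Chaining the two displays gives the claimed equality $g(x) = c\max_{1 \le i \le r}\inp{b_i,x} + \bigo(1)$. There is no genuine obstacle; the whole point of the corollary is to translate the multiplicative picture of Proposition~\ref{poles_toric} into the additive picture after $x = \log\abs{z}$, at which stage the log-sum-exp collapses to a max up to a bounded error. The only minor care needed is that the $\bigo(1)$ discrepancy produced by Proposition~\ref{poles_toric} is only locally bounded near $z=0$; but this local neighborhood maps under $x = \log\abs{z}$ onto a set of the form $(-\infty,-R)^n$, and by Remark~\ref{restrict_domain} describing $g$ up to $\bigo(1)$ on such a region is enough at the level of germs, which is the setting in which the statement is intended.
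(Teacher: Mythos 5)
Your proof is correct and uses essentially the same argument as the paper: both rely on the elementary two-sided bound $\max_i y_i \le \log\sum_i e^{y_i} \le \max_i y_i + \log r$ (equivalently $\log\max_i\abs{z}^{b_i} \le \log\sum_i\abs{z}^{b_i} \le \log(r\max_i\abs{z}^{b_i})$), after which the conclusion is immediate. Your additional remarks on the change of variables $x=\log\abs{z}$ and the germ-level interpretation via Remark~\ref{restrict_domain} spell out details the paper leaves implicit, but the route is the same.
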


 \begin{proof}
 Since we know that $\log \max\limits_{1 \le i \le r}\abs{z}^{b_i} \le \log (\abs{z}^{b_1} + \cdots + \abs{z}^{b_r}) \le \log \max\limits_{1 \le i \le r} r \cdot \abs{z}^{b_i}$, $\varphi$ can be written as $\displaystyle\frac{c}{2}\log \max\limits_{1 \le i \le r}\abs{z}^{2b_i} + \bigo(1)$. This concludes the proof
 \end{proof}

 Using this, we can compute the Newton convex body of a toric psh function with analytic singularities. For a set of finite points $ b = \{ b_1, \ldots, b_r
\}$ in $\RR^n_{\ge 0}$, let $P(b)$ be the Minkowski addition of the convex hull of $b$ and $\RR^n_{\ge 0}$. We call $P(b)$ the closed polytope determined by $b$.

 \begin{proposition} \label{Newton_analsing}
   Let $\varphi$ be of a form as in Corollary~\ref{cvxcj_analsing}. Then $P(\varphi)$ is the closed subset in $\RR^n_{\ge 0}$ represented as $c P(b) + \RR^n_{\ge 0}$, where $b$ is the set of exponents in a representation of $\varphi$ and $P(b)$ is the convex hull of $\{ b_1 , \ldots , b_r \}$.
 \end{proposition}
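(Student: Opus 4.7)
The plan is to compute $\dom(g^*)$ directly, using Corollary~\ref{cvxcj_analsing} as the main structural input and support-function duality as the main tool. By that corollary, the convex function $g$ associated to $\varphi$ satisfies $g(y) = c \max_{1 \le i \le r} \langle b_i, y\rangle + \bigo(1)$ on $\RR^n_{<0}$. A bounded perturbation of $g$ only shifts $g^*$ by a bounded constant and so does not alter $\dom(g^*) = P(\varphi)$; moreover, by Remark~\ref{restrict_domain} I may enlarge the domain of the supremum defining $g^*$ from the open orthant $\RR^n_{<0}$ to the closed one $\RR^n_{\le 0}$ without changing finiteness. The problem thus reduces to computing the Newton convex body of the model function $g(y) = c \max_{1 \le i \le r} \langle b_i, y\rangle$, with supremum taken over $y \in \RR^n_{\le 0}$.

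The key structural observation is that $\max_i \langle b_i, y\rangle$ is the support function of the finite set $\{b_1,\ldots,b_r\}$, hence also of its convex hull $P(b)$; explicitly, $g(y) = \sup_{v \in cP(b)} \langle v, y\rangle$. I would then verify the two inclusions defining $P(\varphi)$ separately. For the easy containment $cP(b) + \RR^n_{\ge 0} \subseteq P(\varphi)$, write $x = cv + u$ with $v \in P(b)$ and $u \in \RR^n_{\ge 0}$, and express $v = \sum_i \lambda_i b_i$ as a convex combination. Then for every $y \in \RR^n_{\le 0}$ one has $\langle u, y\rangle \le 0$ and $c\langle v, y\rangle = \sum_i \lambda_i\, c\langle b_i, y\rangle \le g(y)$, so $\langle x, y\rangle - g(y) \le 0$; the supremum is finite and $x \in \dom(g^*)$.

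For the reverse inclusion I argue the contrapositive. Suppose $x \notin K := cP(b) + \RR^n_{\ge 0}$. Since $K$ is closed and convex (being the Minkowski sum of a compact polytope and a closed cone), the separating hyperplane theorem furnishes $y_0 \neq 0$ with $\langle x, y_0\rangle > \sup_{z \in K}\langle z, y_0\rangle$. The recession cone of $K$ contains $\RR^n_{\ge 0}$, so finiteness of the right-hand side forces $y_0 \in \RR^n_{\le 0}$; the supremum then evaluates to $c\max_i\langle b_i, y_0\rangle = g(y_0)$. Hence $\langle x, y_0\rangle - g(y_0) > 0$, and positive homogeneity gives $\langle x, ty_0\rangle - g(ty_0) \to +\infty$ as $t \to +\infty$, so $g^*(x) = +\infty$ and $x \notin P(\varphi)$. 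The step that requires the most care is this separation argument, specifically the verification that the separating direction $y_0$ must lie in $\RR^n_{\le 0}$ so as to be a legitimate test point for the supremum defining $g^*$; this uses the unboundedness of $K$ along $\RR^n_{\ge 0}$ in an essential way. Everything else is straightforward manipulation of support functions.
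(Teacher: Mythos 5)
Your proof is correct and rests on the same underlying strategy as the paper's: given $x \notin cP(b)+\RR^n_{\ge 0}$, produce a separating direction lying in $\RR^n_{\le 0}$ and scale along it to force $g^*(x)=+\infty$. The implementation, however, is cleaner. The paper constructs the separating direction by hand as (the negative of) the vector $v$ from $t$ to its nearest point in $Q = P(b)+\RR^n_{\ge 0}$, and then verifies divergence along $y_k = -kv-\epsilon$ by a term-by-term estimate built on the obtuse-angle inequality $\langle t - b_j + v, -v\rangle \ge 0$. You instead invoke the abstract separating hyperplane theorem, use the fact that the recession cone of $K = cP(b)+\RR^n_{\ge 0}$ contains $\RR^n_{\ge 0}$ to force the separating normal $y_0$ into $\RR^n_{\le 0}$, and then recognize $g$ as the support function of $cP(b)$, which immediately identifies $\sup_{z\in K}\langle z, y_0\rangle$ with $g(y_0)$ and reduces the divergence step to positive homogeneity of $g$. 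The support-function identification together with the recession-cone step is the conceptually right framing and avoids both the explicit projection and the auxiliary $\epsilon$-shift; the reduction to $\RR^n_{\le 0}$ via Remark~\ref{restrict_domain} and the remark that a bounded perturbation of $g$ leaves $\dom(g^*)$ unchanged are both handled correctly.
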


 \begin{rmk}
 In Proposition~\ref{Newton_analsing}, we need not assume that $\varphi$ itself is of analytic singularities. Indeed, no conditions of $b_i$ are imposed.
 \end{rmk}

 \begin{proof}
Since $P(c \varphi) = c P(\varphi)$, we may assume that $c = 1$.
 It is just from writing condition of being in $P(\varphi)$. Denote $Q$ by $P(b) + \RR^n_{\ge 0}$. Then since each $b_i$ belongs to $P(\varphi)$, the minimality of convex hull $P(b)$ implies that $Q$ is contained in $P(\varphi)$.

 If $t \in \RR^n_{\ge 0}$ in not in $Q$, then there is a unique vector $v$ determines the distance $d(t, P(b)+ \RR^n_{\ge 0})=d(t, t+v)= \abs{v}>0$. Here, the uniqueness of $v$ follows from the convexity of set $Q$.
Also $v$ should be in $\RR^n_{\geq 0}$. This $v$ determines a unique region $A_v$ defined as:
$$ A_v = \{ y \in \RR^n \mid \inp{ -v, y } \le \inp{ -v, t+v } \}.$$
In fact, $A_v$ is the lower half-space of a supporting hyperplane of $Q$ at $t+v$ which is perpendicular to $-v$. Since $b_j$ is in $A_v$ for every $1 \le j \le r$, we know that 
$$
\inp{ t - b_j + v , -v} \ge 0.
$$

Now. let $y_k = -kv - \epsilon \in \RR_{<0}^n$ be a sequence of points in $\RR_{<0}^n$ where $k$ is a positive integer and $\epsilon$ is a small vector in $\RR_{<0}^n$. By the definition of $P(\varphi)$, the following sequence $t_k = \inp{t, y_k} - \max\limits_{1\le i \le r} \inp{b_i , y_k}$ should be bounded above. It is equivalent to the fact that $\min\limits_{1\le i \le r} \inp{t - b_i , y_k}$ is bounded above. We can estimate a lower bound of $\min\limits_{1\le i \le r} \inp{t - b_i , y_k}$ as follows:
\begin{equation*}
\begin{split}
\min\limits_{1\le i \le r} \inp{t - b_i , y_k} &= \min\limits_{1\le i \le r} \inp{t-b_i, -kv -\epsilon} \\
&= \min\limits_{1\le i \le r} \inp{t-b_i, -kv} + \inp{t-b_i, -\epsilon}  \\
&\ge \min\limits_{1\le i \le r} \left( \inp{t - b_i +v , -kv} + \inp{-v , -kv} + C \right) \\
&\ge \min\limits_{1\le i \le r} k \abs{v}^2 + C.
\end{split}
\end{equation*}

Here, $C$ is a bounded constant coming from $\inp{t-b_i, -\epsilon}$ and the last inequality comes from our observation $\inp{ t - b_j + v , -v} \ge 0$ discussed above. But this goes to $\infty$ as $k \rightarrow  \infty$ and contradicts the definition of $P(\varphi)$.
 \end{proof}

 \begin{rmk}
 The above proposition also demonstrates that the definition of the Newton convex body of a toric psh function is a generalization of the definition of Newton convex body of a monomial ideal. In fact, the Newton convex body of a monomial ideal $\mathfrak{a} = (z^{b_1}, \ldots , z^{b_m})$ where $b_1, \ldots , b_m$ are exponents of generators of $\mathfrak{a}$ is defined by $\mathrm{Conv }(b_1, \ldots , b_m) + \RR^n_{\ge 0}$. This is consistent with the Newton convex body of a toric psh function with analytic singularities determined by a monomial ideal $\mathfrak{a}$. See \cite{Gu12, H01} for more related details.
 \end{rmk}

\begin{example}[\cite{G20}] \label{guanexample}
If $\varphi = \displaystyle\max_{1\leq i \leq m} \log {\lvert z_i \rvert}^{a_i}$ defined as a germ of a toric psh function at $(\mathbf{C}^n,0)$, then by Proposition \ref{Newton_analsing}, its Newton convex body can be computed concretely, $P(\varphi)=\left( H \cap \RR^n_{\ge 0} \right)+ \RR^n_{\ge 0}$, where $H$ is the hyperplane defined by a linear equation $\displaystyle\sum_{i=1}^m \frac{x_i}{a_i} = 1$.
\end{example}

\section{Convex conjugate of analytic singularities}

In this section, we will characterize the convex conjugates of toric psh functions with analytic singularities and exhibit a relation between convergence of convex functions and convergence of their conjugate functions.

\begin{definition}  \label{H-poly}
A closed subset $P \subseteq \RR^n$ is an $\mcH$-\textit{polyhedron} if $P$ is given by the intersection of finite numbers of half-spaces. More explicitly, there exist $p$ vectors $a_1, \ldots, a_p$ and $p$ real numbers $b_1, \ldots, b_p$ such that $P$ is given by $P = \{ x \in \RR^n : \inp{a_i, x} \le b_i \text{ for all $i=1,\ldots, p$}\}$.
\end{definition}
By normal vectors in this paper, we mean \textit{outward} normal vectors. If all $a_i$ and $b_i$ can be taken to be in $\QQ^n$ and $\QQ$ respectively, $P$ is said to be \textbf{rational}.

\begin{theorem} [{\cite[Theorem 2.4.9]{Sch}}] \label{normal_conicalhull}
Let $P$ be an $\mcH$-polyhedron in $\RR^n$ and $p$ a point in the boundary of $P$. If $F_1, \ldots, F_m$ are the facet of $P$ containing $p$ and $a_1, \ldots, a_m$ are normal vectors for $F_1,\ldots, F_m$ respectively, then every normal vector $a$ of a supporting hyperplane of $P$ at $p$ is in the conical hull of $a_1,\ldots, a_m$, that is, there are nonnegative real numbers $\lambda_1,\ldots, \lambda_m$ such that
$$
a = \lambda_1 a_1 + \cdots + \lambda_m a_m. 
$$
\end{theorem}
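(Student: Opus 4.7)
The plan is to prove this by translating to the origin and then identifying the normal cone of $P$ at $p$ as the polar of its tangent cone, which will allow a direct application of Farkas's lemma. Without loss of generality, translate so $p = 0$ (the normal cone is translation-invariant), and write $P = \{x \in \RR^n : \inp{b_i, x} \le c_i, \ i = 1, \ldots, N\}$ for some irredundant $\mcH$-representation. Let $I = \{i : \inp{b_i, p} = c_i\}$ be the set of active constraints at $p$; after reducing to an irredundant representation, the indices in $I$ correspond bijectively to the facets $F_1, \ldots, F_m$ of $P$ containing $p$, and we may assume $\{b_i : i \in I\} = \{a_1, \ldots, a_m\}$ up to positive scaling.

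The easy inclusion $\cone(a_1, \ldots, a_m) \subseteq N(P, p)$ is immediate: each $a_j$ is the outward normal of a supporting hyperplane containing $F_j \ni p$, so it lies in the normal cone, and $N(P, p)$ is closed under nonnegative linear combinations. For the reverse inclusion, I would compute the tangent cone at $p$ from the half-space description:
\[
T(P, p) = \{y \in \RR^n : \inp{a_j, y} \le 0 \text{ for } j = 1, \ldots, m\},
\]
using that the non-active constraints at $p$ remain strictly satisfied for $y$ sufficiently small. Then, since a vector $a$ is a normal vector of a supporting hyperplane of $P$ at $p$ exactly when $\inp{a, x - p} \le 0$ for all $x \in P$, and by taking $x = p + ty$ with $y \in T(P, p)$ and small $t > 0$, one obtains $\inp{a, y} \le 0$ for every $y \in T(P, p)$. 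Thus $a$ lies in the polar cone $T(P, p)^{\circ}$, and by Farkas's lemma this polar equals $\cone(a_1, \ldots, a_m)$.

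The main obstacle is really the step where one reduces to an irredundant $\mcH$-representation and identifies the active constraints with facet normals. In a general (possibly redundant) $\mcH$-description, an active constraint at $p$ need not define a facet containing $p$—its defining hyperplane could meet $P$ in a lower-dimensional face, or be entirely redundant. Handling this cleanly requires invoking the standard structural result that every $\mcH$-polyhedron admits an irredundant description whose half-spaces are in bijection with its facets (via a dimension argument on the faces), and checking that Farkas's lemma applied to the irredundant system produces exactly the conical hull $\cone(a_1, \ldots, a_m)$. Once this identification is in place, the two inclusions combine to give the theorem.
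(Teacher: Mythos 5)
The paper does not give a proof of this statement; it is cited verbatim from Schneider \cite[Theorem~2.4.9]{Sch}, so there is no internal argument to compare against. Your proof via the tangent cone and Farkas's lemma is the standard textbook derivation and is correct: from an irredundant $\mcH$-representation, the tangent cone at $p$ is $T(P,p) = \{y : \inp{a_j,y}\le 0, \ j=1,\ldots,m\}$; any supporting-hyperplane normal at $p$ lies in its polar $T(P,p)^{\circ}$; and Farkas's lemma (equivalently, closedness of finitely generated cones together with the bipolar theorem) identifies that polar with $\cone(a_1,\ldots,a_m)$. The technical point you flag yourself --- that in a reduced $\mcH$-representation the constraints active at $p$ match up with facets through $p$ --- is the right thing to worry about, and it does rely on $P$ being full-dimensional; this hypothesis is not explicit in the statement as written, but it holds in the paper's only application (to $\epi(g)\subset\RR^{n+1}$). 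Schneider's version handles the lower-dimensional case as well, essentially by also allowing $\pm$-pairs of normals to the affine hull of $P$ among the generators, a wrinkle your argument would need to absorb if one wanted the fully general statement.
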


\begin{definition} \label{V-poly}
A closed subset $P \subseteq \RR^n$ is a $\mcV$-\textit{polyhedron} if there exist a finite set of points $Y$ and a finite set of vectors $V$ such that $P$ is the sum of the convex hull of $Y$ and the conical hull of $V$, that is,
$$
P = \conv(Y) + \cone(V).
$$
\end{definition}

As in the case of $\mcH$-polyhedrons, a $\mcV$-polyhedron is said to be rational if one can take all points in $Y$ and all vectors in $V$ from $\QQ^n$.

\begin{lemma} \label{epigraph_polyhedron_maxaffine}
Let $Q$ be a rational $\mcH$-polyhedron in $\RR^n_{\le 0}$ such that $Q + \RR^n_{\le 0} \subseteq Q$ and let $g$ be a convex function with $\dom(g) = Q$, increasing in each variable. Then the followings are equivalent.
\begin{enumerate}
\item The epigraph of $g$, $\epi(g) := \{ (x',x_{n+1}) \in \RR^n \times \RR : x_{n+1} \ge g(x') \}$, is a rational $\mcH$-polyhedron.
\item There are a finite set of vectors $\{ s_1, \ldots, s_N \}$ in $\QQ^n_{\ge 0}$ and a finite set of rational numbers $\{a_1, \ldots, a_N\}$ such that
$$
g(x) = \max_{1\le i \le N}{ \left( \inp{ s_i,x } + a_i \right) }
$$
on $Q$.
\end{enumerate}

Symmetrically, if we set $P$ as a rational $\mcH$-polyhedron in $\RR^n_{\ge 0}$ such that $P + \RR^n_{\ge 0} \subseteq P$ and let $h$ be a convex function with $\dom(h) = P$, decreasing in each variable. Then the followings are equivalent.
\begin{enumerate}
\item The epigraph of $h$ is a rational $\mcH$-polyhedron.
\item There are a finite set of vectors $\{ t_1, \ldots, t_N \}$ in $\QQ^n_{\le 0}$ and a finite set of rational numbers $\{b_1, \ldots, b_N\}$ such that
$$
h(x) = \max_{1\le i \le N}{ \left( \inp{ t_i,x } + b_i \right) }
$$
on $P$.
\end{enumerate}
\end{lemma}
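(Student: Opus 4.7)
The plan is to prove both implications by directly manipulating the $\mcH$-representation of $\epi(g)$ and using the monotonicity of $g$ (together with the recession property of $Q$) to pin down the signs of the normal vectors.

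The direction (2) $\Rightarrow$ (1) is immediate: if $g(x) = \max_{1 \le i \le N}\bigl(\inp{s_i, x} + a_i\bigr)$ on $Q$, then
\begin{equation*}
\epi(g) = (Q \times \RR) \cap \bigcap_{i=1}^{N} \bigl\{ (x, x_{n+1}) \in \RR^{n+1} : \inp{s_i, x} - x_{n+1} \le -a_i \bigr\},
\end{equation*}
which is the intersection of the rational $\mcH$-polyhedron $Q \times \RR$ with finitely many rational half-spaces, hence itself a rational $\mcH$-polyhedron.

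For (1) $\Rightarrow$ (2), I would start from a rational $\mcH$-representation $\epi(g) = \bigcap_{j=1}^{p}\{(x, x_{n+1}) : \inp{u_j, x} + c_j x_{n+1} \le d_j\}$ with $u_j \in \QQ^n$ and $c_j, d_j \in \QQ$. The central step---and the only one I regard as carrying real content---is to translate two families of recession directions of $\epi(g)$ into sign conditions on $(u_j, c_j)$. Since $g$ is finite-valued on $Q$, the vector $(0, 1) \in \RR^{n+1}$ lies in the recession cone of $\epi(g)$, forcing $c_j \le 0$ for every $j$. Next, for each $1 \le k \le n$, the vector $(-e_k, 0)$ is also a recession direction of $\epi(g)$: if $(x, x_{n+1}) \in \epi(g)$ and $t \ge 0$, then $x - te_k \in Q$ because $Q + \RR^n_{\le 0} \subseteq Q$, and the monotonicity of $g$ gives $g(x - te_k) \le g(x) \le x_{n+1}$. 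Consequently $-(u_j)_k \le 0$ for all $j, k$, i.e., $u_j \in \QQ^n_{\ge 0}$.

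With the sign conditions in hand, I split the defining inequalities according to the sign of $c_j$. Those with $c_j = 0$ involve only $x$ and are automatically satisfied on $Q = \dom(g)$, so they can be discarded. For each $c_j < 0$, dividing by $-c_j > 0$ rewrites the constraint as $x_{n+1} \ge \inp{s_j, x} + \alpha_j$ with $s_j := -u_j/c_j \in \QQ^n_{\ge 0}$ and $\alpha_j := d_j/c_j \in \QQ$. Re-indexing these finitely many inequalities by $i = 1, \ldots, N$ and reading off the epigraph characterization then yields $g(x) = \max_{1 \le i \le N}(\inp{s_i, x} + \alpha_i)$ on $Q$, as desired. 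The symmetric statement for a decreasing convex function $h$ on $P \subseteq \RR^n_{\ge 0}$ with $P + \RR^n_{\ge 0} \subseteq P$ follows mutatis mutandis: the relevant recession direction becomes $(e_k, 0)$, which instead forces $u_j \in \QQ^n_{\le 0}$ and hence $t_j := -u_j/c_j \in \QQ^n_{\le 0}$. Apart from this recession-direction/monotonicity bridge, the remainder of the argument is routine polyhedral bookkeeping, so I do not anticipate any substantial obstacle.
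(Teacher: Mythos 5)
Your proof is correct and follows essentially the same route as the paper's: write $\epi(g)$ as a rational $\mcH$-polyhedron, separate the defining inequalities into those with nonzero last component (to be normalized) and those with zero last component (which recover $Q$ and are discarded), and read off $g$ as a rational max-affine function. The one place you differ, to your advantage, is in deriving $u_j \in \QQ^n_{\ge 0}$ and $c_j \le 0$: the paper states without much justification that the normal vectors land in $\QQ^n_{\ge 0}$ ``because all $s_i'$ form a minimal system and $g$ is increasing,'' whereas you derive the sign conditions cleanly from the recession directions $(0,1)$ and $(-e_k,0)$ of $\epi(g)$, which in particular avoids any appeal to minimality of the representation. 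The only small omission is that you should note at least one $c_j$ is strictly negative, so the max in (2) is not over an empty set; this follows since otherwise $\epi(g) = Q\times\RR$ and $g$ would be identically $-\infty$ on $Q$, contradicting that $g$ is a finite-valued convex function.
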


\begin{proof}
Suppose that $\epi(g)$ is a rational $\mcH$-polyhedron. Let $S^t x \le a$ be a minimal system of inequalities for $\epi(g)$, where $S$ is an $(n+1) \times (p+q)$ matrix $[s_1 \, \cdots \, s_{p+q}]$ with $s_i, a \in \QQ^{n+1}$. We may assume that the last $q$ vectors $s_{p+1}, \ldots ,s_{p+q}$ correspond with a minimal system of inequalities for $Q$, that is, the $(n+1)$-th coordinate of $s_k$ is nonzero if and only if $k=1,\ldots, p$. Thus we can normalize $s_1,\ldots ,s_p$ so that their $(n+1)$-th coordinates are all $-1$. Set $s_k = (s_k', -1) \in \RR^n \times \RR^1 (k=1,\ldots, p$). Now we shall prove that $g$ can be written as the form
$$
g(x') = \max_{1\le i \le p}( \inp{s_i',x'} - a_i ),
$$
where $a_i$ is an $i$-th coordinate of $a$. Then $x = (x',x_{n+1}) \in \epi(g)$ if and only if $x' \in Q$ and $x$ satisfies the \textit{nonvertical} inequaliteis in $S^t x \le a$:
\begin{align*}
\inp{s_1', x'} - a_1 & \le x_{n+1}, \\
\vdots \\
\inp{s_p', x'} - a_p & \le x_{n+1}.
\end{align*}
Equivalently, $x = (x',x_{n+1}) \in \epi(g)$ if and only if $x' \in Q$ and
\begin{equation} \tag{$\ast$}
\max_{1\le i \le p}( \inp{s_i', x'} - a_i ) \le x_{n+1}.
\end{equation}
Observing that $g(x') = \inf{ \{x_{n+1} :  (x',x_{n+1}) \in \epi(g) \} }$, we have $g(x') = \max( \inp{s_i', x'} - a_i )$. Note that every $s_i'$ should be in $\QQ^n_{\ge 0}$, because all $s_i'$ form a minimal system and $g$ is increasing.

The converse is immediate from the observation $(\ast)$.
\end{proof}
Following the same argument in the proof of Lemma~\ref{epigraph_polyhedron_maxaffine} without the assumption on $Q$ to be rational, we know that a similar statement for arbitrary $\mcH$-polyhedron holds.
\begin{rmk} \label{epigraph_rmk}
In Lemma~\ref{epigraph_polyhedron_maxaffine}, if $Q$ is a (not necessarily rational) $\mcH$-polyhedron, the followings are equivalent.
\begin{enumerate}
\item The epigraph of $g$ is an $\mcH$-polyhedron.
\item There are a finite set of vectors $\{ s_1, \ldots, s_N \}$ in $\RR^n_{\ge 0}$ and a finite set of real numbers $\{a_1, \ldots, a_N\}$ such that
$$
g(x) = \max_{1\le i \le N}{ \left( \inp{ s_i,x } + a_i \right) }
$$
on $Q$.
\end{enumerate}
\end{rmk}

\begin{theorem} \label{polyhedron_conjugate_polyhedron}
Let $g$ and $Q$ be as in Lemma~\ref{epigraph_polyhedron_maxaffine} and assume that $g$ satisfies one of the equivalent conditions in Lemma~\ref{epigraph_polyhedron_maxaffine}. If $h$ is the convex conjugate of $g$, then $\epi(h)$ is a rational $\mcV$-polyhedron.
\end{theorem}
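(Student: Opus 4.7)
The plan is to dualize the $\mcH$-$\mcV$-polyhedron description of $\epi(g)$ and read off an $\mcH$-description of $\epi(h)$ from the Minkowski--Weyl generators. One final application of Theorem~\ref{H_V_equivalence} will then promote this to the desired $\mcV$-description.

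First, by hypothesis and Lemma~\ref{epigraph_polyhedron_maxaffine}, $\epi(g) \subseteq \RR^{n+1}$ is a rational $\mcH$-polyhedron. Applying Theorem~\ref{H_V_equivalence} in the rational category, I would write
\[
\epi(g) = \conv\{(x_1, s_1), \ldots, (x_k, s_k)\} + \cone\{(u_1, r_1), \ldots, (u_l, r_l)\}
\]
for finitely many rational vertices $(x_i, s_i) \in \QQ^{n+1}$ and rational ray directions $(u_j, r_j) \in \QQ^{n+1}$.

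Next, I would re-express $h$ as a supremum over $\epi(g)$ rather than over $\dom(g) = Q$. Since every $(x,s) \in \epi(g)$ satisfies $s \ge g(x)$ and the quantity $\inp{x,y}-s$ is decreasing in $s$, we have
\[
h(y) = \sup_{x \in Q}\bigl(\inp{x,y} - g(x)\bigr) = \sup_{(x,s) \in \epi(g)}\bigl(\inp{x,y} - s\bigr),
\]
so $(y,t) \in \epi(h)$ precisely when the linear functional $(x,s) \mapsto \inp{x,y} - s$ is bounded above by $t$ on $\epi(g)$. Writing a general element of $\epi(g)$ as $\sum_i \lambda_i (x_i, s_i) + \sum_j \mu_j (u_j, r_j)$ with $\lambda_i \ge 0$, $\sum_i \lambda_i = 1$, and $\mu_j \ge 0$, the $\mu_j$-coefficients can be arbitrarily scaled, so boundedness forces $\inp{u_j, y} - r_j \le 0$ for all $j$; the remaining convex-combination part then collapses to $\inp{x_i, y} - s_i \le t$ for all $i$. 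The converse is immediate. Hence $\epi(h)$ is the set of $(y,t) \in \RR^{n+1}$ satisfying the finitely many rational linear inequalities
\[
\inp{x_i, y} - s_i \le t \quad (1 \le i \le k), \qquad \inp{u_j, y} - r_j \le 0 \quad (1 \le j \le l),
\]
exhibiting $\epi(h)$ as a rational $\mcH$-polyhedron. A final invocation of Theorem~\ref{H_V_equivalence} yields that $\epi(h)$ is also a rational $\mcV$-polyhedron.

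The heavy lifting is entirely absorbed into the rational $\mcH$-$\mcV$ equivalence; once that is available, the argument becomes a routine biduality calculation. The one conceptual point to get right is recognising $h$ as (essentially) the support function of $\epi(g)$ evaluated at the ``vertical'' direction $(y, -1)$, which is what permits the passage from $\sup_{x \in Q}$ to $\sup_{(x,s) \in \epi(g)}$ and thereby turns the $\mcV$-generators of $\epi(g)$ into the defining half-space inequalities of $\epi(h)$. The only minor technical obstacle is verifying that rationality is preserved at each stage — that is, that the generators from Theorem~\ref{H_V_equivalence} can be chosen rational and that the inequalities so produced have rational data — which follows from the standard rational form of Minkowski--Weyl.
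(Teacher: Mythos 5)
Your proof is correct, and it takes a genuinely different route from the paper's. The paper constructs a $\mcV$-description of $\epi(h)$ directly: it identifies the candidate vertices $(s_i',a_i)$ with the affine pieces of $g$, invokes Schneider's Theorem~\ref{normal_conicalhull} to decompose the normal of any supporting hyperplane of $\epi(g)$ into a conical combination of facet normals, and runs a two-case argument according to whether the supremum defining $h(s')$ is attained in the interior or on the boundary of $Q$. You go the other way around: you apply the (rational) Minkowski--Weyl theorem once to $\epi(g)$, recognize $h(y)$ as the support function of $\epi(g)$ evaluated at $(y,-1)$ so that $\sup_{x\in Q}$ becomes $\sup_{(x,s)\in\epi(g)}$, and then the $\mcV$-generators of $\epi(g)$ translate immediately into finitely many rational half-space inequalities cutting out $\epi(h)$; a final $\mcH$-to-$\mcV$ conversion finishes. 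Your argument is shorter and avoids both the case split and the appeal to Theorem~\ref{normal_conicalhull}, at the cost of one extra use of Theorem~\ref{H_V_equivalence}. The one point to be explicit about is that you need the \emph{rational} form of Minkowski--Weyl (rational $\mcH$-data give rational $\mcV$-generators and conversely); the paper's Theorem~\ref{H_V_equivalence} as stated does not mention rationality, though this refinement is standard and the paper tacitly uses it as well.
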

\begin{proof}
Assume that $g$ can be written as
$$
g(x') = \max_{1\le i \le p} ( \inp{ s_i', x'} - a_i )
$$
on $Q$ with $s_i' \in \QQ^n_{\ge 0}$ and $a_i \in \QQ$. Observe that $h(s_i') = \sup_{y'}(\inp{s_i', y'} - g(y'))$ attains its supremum at any $y'$ such that $(y',g(y'))$ is on a facet $F_i$ of $\epi(g)$ which is given by the equation $\inp{(s_i',-1),x} = a_i$. Thus we have $g(y') = \inp{s_i',y'} - a_i$ for such $y'$ and thus $h(s_i') = a_i$. Observe that in general $s'$ is contained in $P = \dom(h)$ and $h(s') = k$ if and only if $\inp{(s',-1),x} = k$ is a supporting hyperplane of $\epi(g)$. For notational convenience, write $s_i = (s_i',-1)$. Let $V$ be the set of points in $\RR^{n+1}$ given by
\begin{multline*}
V = \{ (u',b) \in \RR^n_{\ge 0} \times \RR : \text{$\inp{u',x'} = b$ is a supporting hyperplane $H'$ of $Q$} \\
\text{such that $H'\cap Q$ is a facet of $Q$. } \}
\end{multline*}
We will prove
\begin{equation} \label{epih_vpoly}
\epi(h) = \conv((s_1',a_1), \ldots, (s_p',a_p)) + \cone(V \cup \{e_{n+1}\}),
\end{equation}
where $e_{n+1} = (0,\ldots,0,1) \in \RR^{n+1}$.

Let $s'$ be a point in $P$. Since $\inp{s',y'} -g(y')$ is a piecewise-affine concave function in $y'$ on $Q$, it attains the supremum, say at $y_0' \in Q$.  By the above observation, $\inp{(s',-1),x} = h(s')$ is a supporting hyperplane of $\epi(g)$ at $y_0$. If $y_0'$ is in the interior of $Q$, then $(s',-1)$ is a positive combination of the normal vectors of the \textit{nonvertical} facets of $\epi(g)$ containing $y_0$. Here, by a nonvertical facet, we mean that its normal vector has nonzero $(n+1)$-th component. Without loss of generality, suppose that $F_1, \ldots, F_m$ are the facets of $\epi(g)$ containing $y_0$. Then by Theorem~\ref{normal_conicalhull}, there exist $\lambda_1,\ldots,\lambda_m \ge 0$ such that
$$
(s',-1) = \lambda_1 s_1 + \cdots + \lambda_m s_m.
$$
Comparing the $(n+1)$-th component of both sides of this, we know that $s'$ is given by the convex combination of $s_1,\ldots, s_m$ with coefficients $\lambda_1,\ldots, \lambda_m$. Furthermore, $y_0$ satisfies the equation $\inp{s_i,x} = a_i$ for all $i = 1,\ldots, m$, the convex combination of these $m$ equations with coefficients $\lambda_1,\ldots, \lambda_m$ also holds at $y_0$. Therefore,
$$
h(s') = \inp{(s',-1),y_0} = \sum_{i=1}^{m}{ \lambda_i \inp{s_i, y_0} } = \sum_{i=1}^{m}{\lambda_i a_i}
$$
holds and thus $(s',h(s'))$ is contained in $\conv(s_1,\ldots, s_m)$.

Now assume that $y_0'$ is on the boundary of $Q$ and cannot be taken to be in the interior of $Q$. Let $u_1',\ldots, u_l'$ be normal vectors of the facets of $Q$ at $y_0'$ with $\inp{u_i', y_0'} = b_i$. Write $u_i = (u_i',0)$ for $i=1,\ldots, l$. By Theorem~\ref{normal_conicalhull} again, we obtain
\begin{equation} \label{lin_comb_at_boundary}
(s',-1) = \sum_{i=1}^{m}{\lambda_i s_i} + \sum_{j=1}^{l}{ \mu_j u_j },
\end{equation}
where $\sum_{i}{\lambda_i} = 1$ and $\mu_j \ge 0$ for all $j$. Applying $\inp{\bullet, (y_0',g(y_0'))}$ on both sides of (\ref{lin_comb_at_boundary}), we have
$$
h(s') = \sum_{i=1}^{m}{ \lambda_i h(s_i') } + \sum_{j=1}^{l}{\mu_j b_i},
$$
which implies
$$
(s',h(s')) = \sum_{i=1}^{m}{\lambda_i (s_i',a_i)} + \sum_{j=1}^{l}{ \mu_j (u_j', b_j) } \in \conv((s_1',a_1),\ldots, (s_p',a_p)) + \cone(V).
$$
This shows that $\epi(h)$ is contained in the sum of the convex hull of $(s_1',a_1), \ldots, (s_p',a_p)$ and the conical hull of $V \cup \{e_{n+1}\}$. The converse inclusion follows immediately from the definition of supporting hyperplanes.
Because the image of a $\mcV$-polyhedron under a projection is again a $\mcV$-polyhedron, we conclude that $P$ and $\epi(h)$ are $\mcV$-polyhedron. Since we can take $(s_i',a_i)$ and $(u_i',b_i)$ to be rational, $P$ and $\epi(h)$ are also rational.
\end{proof}

\begin{theorem}[\cite{Mot}, {\cite[Theorem 1.2]{Z}}] \label{H_V_equivalence}
Every $\mcH$-polyhedron is a $\mcV$-polyhedron. Also every $\mcV$-polyhedron is an $\mcH$-polyhedron.
\end{theorem}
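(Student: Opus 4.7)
The plan is to establish the Weyl--Minkowski equivalence in two stages: first reduce both implications to the analogous statement for polyhedral cones in $\RR^{n+1}$ via homogenization, and then prove the cone version by combining Fourier--Motzkin elimination with polarity duality. Given an $\mcH$-polyhedron $P = \{x \in \RR^n : Ax \le b\}$, I would form the homogenization $\widetilde{P} = \{(x,t) \in \RR^{n+1} : Ax - tb \le 0, \ -t \le 0\}$, an $\mcH$-cone whose slice at $t=1$ recovers $P$; dually, a $\mcV$-polyhedron $Q = \conv(y_1, \ldots, y_p) + \cone(v_1, \ldots, v_q)$ homogenizes to the $\mcV$-cone $\widetilde{Q} = \cone((y_1,1), \ldots, (y_p,1), (v_1,0), \ldots, (v_q,0))$, again recovered at $t=1$. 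Since the slice at $t=1$ of an $\mcH$-cone (respectively $\mcV$-cone) in $\RR^{n+1}$ is manifestly an $\mcH$-polyhedron (respectively $\mcV$-polyhedron), it suffices to show that a cone in $\RR^n$ is an $\mcH$-cone if and only if it is finitely generated.

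For the direction $\mcV$-cone $\Rightarrow$ $\mcH$-cone, I would exhibit $\cone(v_1, \ldots, v_k)$ as the image under the coordinate projection $(x,\lambda) \mapsto x$ of the $\mcH$-polyhedron
$$
\Omega = \left\{(x, \lambda) \in \RR^n \times \RR^k : x - \sum_{i=1}^k \lambda_i v_i = 0, \ \lambda_i \ge 0\right\},
$$
and then invoke Fourier--Motzkin elimination: after grouping the inequalities by the sign of the coefficient of a single variable $\lambda_j$ to be eliminated, each pairing of an upper-bound inequality with a lower-bound inequality produces one combined inequality not involving $\lambda_j$, and the resulting finite system cuts out the one-step projection. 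Iterating $k$ times eliminates all the $\lambda$-variables and exhibits $\cone(v_1, \ldots, v_k)$ as an $\mcH$-cone.

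For the converse direction, the tool is the polar cone $C^\circ = \{y : \inp{x, y} \le 0 \text{ for all } x \in C\}$. From the direction just proved, every finitely generated cone is a finite intersection of closed half-spaces and is therefore closed. Given an $\mcH$-cone $C = \{x : \inp{a_i, x} \le 0, \ i=1,\ldots,m\}$, the inclusion $\cone(a_1, \ldots, a_m) \subseteq C^\circ$ is immediate, and the reverse inclusion (Farkas' lemma) follows by separating any $y \notin \cone(a_1, \ldots, a_m)$ from this closed $\mcV$-cone; thus $C^\circ = \cone(a_1, \ldots, a_m)$. Applying the forward direction to $C^\circ$ expresses it also as an $\mcH$-cone $\{y : \inp{c_k, y} \le 0, \ k=1,\ldots, N\}$, and the same Farkas argument identifies $(C^\circ)^\circ = \cone(c_1, \ldots, c_N)$. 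The bipolar identity $(C^\circ)^\circ = C$, valid for any closed convex cone by standard hyperplane separation, then presents $C$ itself as a $\mcV$-cone. The hardest step is the closedness-plus-separation argument behind Farkas' lemma; once that is in place, the remaining work is a careful double application of polarity combined with homogenization bookkeeping.
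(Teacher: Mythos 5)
The paper offers no proof of this theorem; it simply cites Motzkin and Ziegler \cite{Mot}, \cite{Z} as the source. Your proposal is correct and is essentially the proof given in the cited reference \cite[Theorem 1.2 and Chapter 1]{Z}: homogenize to polyhedral cones, prove the $\mcV\Rightarrow\mcH$ direction for cones by Fourier--Motzkin elimination applied to the lift $\Omega$, and obtain the $\mcH\Rightarrow\mcV$ direction by polarity (Farkas via separation from a closed finitely generated cone, then the bipolar identity). The only points you leave implicit, and which a fully written-out proof would spell out, are the dehomogenization bookkeeping (when re-slicing the $\mcV$-cone for $\widetilde{P}$ at $t=1$, one must sort generators by the sign of the last coordinate and rescale those with positive last coordinate to $t=1$, which is legitimate because the inequality $-t\le 0$ forces all generators to have nonnegative last coordinate) and the observation that a separating hyperplane for a point outside a closed convex cone can be taken through the origin; both are routine.
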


Thanks to Theorem~\ref{H_V_equivalence}, we can drop $\mcH$ or $\mcV$ from $\mcH$-polyhedrons or $\mcV$-polyhedrons and just call them polyhedrons. Now we have the following characterization for toric psh functions with analytic singularities.

\begin{theorem} \label{analytic_sing_equivalence}
Let $\varphi$ be a toric psh function on $D(0,r) \subseteq \CC^n$ with analytic singularities and let $g$ be the convex function associated to $\varphi$. Then the domain $P$ of $g^*$ is a polyhedron such that $P + \RR^n_{\ge 0} \subseteq P$ and $(2/c)P$ is rational.
Furthermore, $g^*$ can be written as
\begin{equation} \label{max_of_affines}
g^*(y) = \frac{c}{2} \max_{1\le i \le N}{( \inp{t_i,y} + b_i )} + \bigo(1)
\end{equation}
where $t_i \in \QQ^n_{\le 0}$ and $b_i \in \QQ$.

Conversely, let $P \subseteq \RR^n_{\ge 0}$ be a polyhedron such that $(2/c)P$ is rational and $P + \RR^n_{\ge 0}\subseteq P$ and
let $h$ be a function on $P$ defined by
\begin{equation} \label{max_plus_bounded}
h(y) = \frac{c}{2} \max_{1\le i \le N}{( \inp{t_i,y} + b_i )} + v(y)
\end{equation}
where $t_i \in \QQ^n_{\le 0}$, $b_i \in \QQ$ and $v$ is a bounded function such that $h$ is convex and decreasing in each variable. Then $\varphi(z_1,\ldots, z_n) := h^*(\log{\vert z_1 \vert}, \ldots, \log{\vert z_n \vert})$ is a toric psh function with analytic singularities on $D(0,r) \subseteq \CC^n$ for some $r>0$.
\end{theorem}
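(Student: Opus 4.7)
The plan is to establish both implications via the polyhedral duality of Theorem~\ref{polyhedron_conjugate_polyhedron}, combined with the explicit structural description of toric psh functions with analytic singularities given by Corollary~\ref{cvxcj_analsing} and Proposition~\ref{Newton_analsing}. In each direction the argument follows the same template: extract the scaling factor $c$ to reach a genuinely rational polyhedral problem, apply polyhedral duality, and then reinstate the scaling.

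For the forward direction, Corollary~\ref{cvxcj_analsing} gives $g(x) = c\max_{1\le i\le N}\inp{b_i,x}+\bigo(1)$ on $\RR^n_{<0}$ with multi-indices $b_i \in \ZZ^n_{\ge 0}$, and Proposition~\ref{Newton_analsing} identifies $P = \dom(g^*) = cP(b) + \RR^n_{\ge 0}$; the inclusion $P + \RR^n_{\ge 0}\subseteq P$ is then immediate, and $(2/c)P = 2P(b) + \RR^n_{\ge 0}$ is a rational polyhedron because the $b_i$ have integer entries. For the form of $g^*$, adding a bounded function to $g$ only shifts $g^*$ by a bounded amount and preserves $\dom(g^*)$, so it suffices to work with $g_0 := c\max_i\inp{b_i,\cdot}$. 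The function $g_0/c$ is a max of integer-linear forms, so by Lemma~\ref{epigraph_polyhedron_maxaffine} $\epi(g_0/c)$ is a rational polyhedron; Theorem~\ref{polyhedron_conjugate_polyhedron} then yields $\epi((g_0/c)^*)$ as a rational polyhedron, and the symmetric half of Lemma~\ref{epigraph_polyhedron_maxaffine} writes $(g_0/c)^*$ as a max of $\QQ$-affine forms with coefficients in $\QQ^n_{\le 0}$. Undoing the scaling via $g_0^*(y) = c\cdot (g_0/c)^*(y/c)$ and refactoring produces the desired form $g^*(y) = \tfrac{c}{2}\max_i(\inp{t_i,y}+b_i)+\bigo(1)$.

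For the converse, write $h = H + v$ with $H(y) := \tfrac{c}{2}\max_i(\inp{t_i,y}+b_i)$ and $v$ bounded, so $h^* = H^* + \bigo(1)$. After rescaling the argument of $H$ by $c/2$ so that the domain becomes the rational polyhedron $(2/c)P$, the function $H/(c/2) = \max_i(\inp{t_i,\cdot}+b_i)$ is a max of $\QQ$-affine forms on a rational polyhedron; Lemma~\ref{epigraph_polyhedron_maxaffine} gives a rational polyhedral epigraph, and Theorem~\ref{polyhedron_conjugate_polyhedron} (used symmetrically, with positive and negative orthants exchanged) produces the same for its conjugate. The symmetric half of Lemma~\ref{epigraph_polyhedron_maxaffine} then realises the conjugate as $\max_j(\inp{\alpha_j,z}+d_j)$ with $\alpha_j \in \QQ^n_{\ge 0}$ and $d_j\in\QQ$, and reinstating the scaling yields $H^*(x) = \max_j(\inp{\alpha_j,x}+\tfrac{c}{2}d_j)$. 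Substituting $x_k = \log\abs{z_k}$ and using the elementary estimate $\log\sum_k a_k = \max_k\log a_k + \bigo(1)$ for positive $a_k$ gives
\[
\varphi(z) = \log\sum_j e^{(c/2)d_j}\abs{z}^{\alpha_j} + \bigo(1).
\]
Clearing a common integer denominator $M$ so that $M\alpha_j \in \ZZ^n_{\ge 0}$ for all $j$ and setting $g_j(z) := e^{Mcd_j/2}z^{M\alpha_j}$, the same estimate rewrites this as $\tfrac{1}{2M}\log\sum_j\abs{g_j(z)}^2 + \bigo(1)$, exhibiting $\varphi$ as a toric psh function with analytic singularities.

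The principal obstacle is tracking the possibly irrational scalar $c$ through the duality while preserving the $\QQ$-rationality of the affine data that appears in the statement. This is handled in both directions by reducing first to the $c=1$ case via the scaling identity $(\lambda F)^*(y) = \lambda F^*(y/\lambda)$ and then reinstating the factor $\tfrac{c}{2}$ after invoking Theorem~\ref{polyhedron_conjugate_polyhedron}. A minor secondary point in the converse is to verify that $\varphi$ is defined on some genuine polydisk $D(0,r)$; this follows because the effective domain of $h^*$ contains a translate of $\RR^n_{\le 0}$ near $(-\infty,\dots,-\infty)$, obtained from $h$ being finite on the unbounded polyhedron $P$.
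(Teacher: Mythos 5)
Your overall plan — reduce to the $c=1$ case, invoke Lemma~\ref{epigraph_polyhedron_maxaffine} and Theorem~\ref{polyhedron_conjugate_polyhedron}, then rescale back — is the natural route, and the paper's one-line proof of Theorem~\ref{analytic_sing_equivalence} points to the same ingredients. But the rescaling step as you wrote it does not work, and this is a genuine gap rather than a gap in exposition.

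Take the converse. You write that ``after rescaling the argument of $H$ by $c/2$ so that the domain becomes the rational polyhedron $(2/c)P$, the function $H/(c/2) = \max_i(\langle t_i,\cdot\rangle+b_i)$ is a max of $\QQ$-affine forms on a rational polyhedron.'' These are two different rescalings and you cannot have both. The function $\frac{2}{c}H$ indeed equals $\max_i(\langle t_i,y\rangle+b_i)$, but its effective domain is $P$, which is \emph{not} rational (only $(2/c)P$ is). If instead you substitute $y=\frac{c}{2}w$ so that the domain is $(2/c)P$, then $\frac{2}{c}H\big(\tfrac{c}{2}w\big)=\max_i\big(\tfrac{c}{2}\langle t_i,w\rangle+b_i\big)$, so the slopes become $\frac{c}{2}t_i$, which are irrational whenever $c\notin\QQ$ and $t_i\neq 0$. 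In either normalization, the hypotheses of Lemma~\ref{epigraph_polyhedron_maxaffine} and Theorem~\ref{polyhedron_conjugate_polyhedron} (rational domain \emph{and} rational normal data) are not met, so you cannot invoke them as stated. The same conflation reappears in the forward direction in the phrase ``undoing the scaling via $g_0^*(y)=c\,(g_0/c)^*(y/c)$ and refactoring'': from $(g_0/c)^*(z)=\max_j(\langle s_j,z\rangle+a_j)$ with $s_j,a_j\in\QQ$ one gets $g_0^*(y)=\max_j(\langle s_j,y\rangle+ca_j)$, whose slopes are $s_j$ (not $\frac{c}{2}\cdot(\text{rational})$), so the rewriting as $\frac{c}{2}\max_j(\langle t_j,y\rangle+b_j)$ with $t_j\in\QQ^n_{\le 0}$ forces $t_j=\frac{2}{c}s_j$, again irrational unless $s_j=0$. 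Your forward argument is in fact rescued by a coincidence you did not observe: Corollary~\ref{cvxcj_analsing} produces $g_0$ as a max of \emph{linear} (no constant) forms on $\RR^n_{\le 0}$, and the Legendre conjugate of such a function is identically $0$ on its domain (this is exactly why Proposition~\ref{Newton_analsing} identifies the domain and nothing more); so $s_j=0$ and the refactoring is vacuously valid. You should state that explicitly rather than appeal to a general ``refactoring'' which would be false.

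For the converse, the gap is real and not merely cosmetic. If you want a correct argument along these lines you must track the factor of $c$ honestly: writing $h^*(x)=\frac{c}{2}\,\psi^*_Q(x)$ with $\psi(w)=\max_i\big(\tfrac{c}{2}\langle t_i,w\rangle+b_i\big)$ on the rational polyhedron $Q=(2/c)P$ leaves a max of affines with irrational slopes on $Q$, and $\psi^*_Q$ has pieces whose slopes are vertex positions of $\epi(\psi)$, produced by mixing rational facets of $Q$ with the facets of $\psi$; these are not obviously rational when some $b_i$ differ. (Note that in the proof of Theorem~\ref{mainthm1} the paper builds $g_i^*(y)=\max_j\big(\langle t_j,y\rangle+\tfrac{c}{2}b_j\big)$, i.e.\ with $\tfrac{c}{2}$ multiplying only the constant; that form rescales cleanly because $h^*(x)=\tfrac{c}{2}\sup_{w\in Q}\big(\langle x,w\rangle-\max_i(\langle t_i,w\rangle+b_i)\big)$ is then $\tfrac{c}{2}$ times the conjugate of a genuinely rational function on a rational polyhedron. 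You should verify which form of $h$ you actually need and argue for that form.) As written, your converse does not establish analytic singularities for $\varphi=h^*(\log|z|)$.
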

\begin{proof}
This is an immediate consequence of Lemma~\ref{epigraph_polyhedron_maxaffine}, Remark~\ref{epigraph_rmk} and Theorem~\ref{polyhedron_conjugate_polyhedron}.
\end{proof}

\begin{rmk}
In the converse part of Theorem~\ref{analytic_sing_equivalence}, $r$ could be any positive real number such that
$$
(-\infty,-r)^n \subseteq \dom(h^*).
$$
\end{rmk}

For the proof of main theorem, we need to describe a relation between the convergence of a decreasing sequence of convex functions and the convergence of an increasing sequence of their conjugates provided that these functions are lower semicontinuous. Let us start with the following simple lemma.
\begin{lemma}
Let $(f_n)$ be a decreasing sequence of convex functions defined on an open subset in $\RR^n$. Then $\lim\limits_{k \to \infty} f_k$ is also convex.
\end{lemma}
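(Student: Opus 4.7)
The plan is to observe that since $(f_k)$ is a decreasing sequence, its pointwise limit exists as an element of $[-\infty, +\infty)$ at every point of the common domain, and in fact equals the infimum $\inf_k f_k$. Convexity is preserved under taking pointwise infima of convex functions only in special cases, but for a decreasing sequence the limit agrees with the pointwise infimum and the convexity inequality passes through without difficulty.

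Concretely, I would fix two points $x,y$ in the open set and $t \in [0,1]$, and start from the convexity of each $f_k$:
\[
f_k(tx + (1-t)y) \le t f_k(x) + (1-t) f_k(y).
\]
Letting $f := \lim_{k\to\infty} f_k$, I would then take $k \to \infty$ on both sides. Since each of the three sequences $(f_k(tx+(1-t)y))$, $(f_k(x))$, $(f_k(y))$ is monotonically decreasing, the limits exist in $[-\infty,+\infty)$, and the inequality is preserved in the limit, yielding
\[
f(tx+(1-t)y) \le t f(x) + (1-t) f(y),
\]
which is the desired convexity.

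The only mildly delicate point is bookkeeping when $f(x)$ or $f(y)$ equals $-\infty$; in that case the right-hand side is $-\infty$ (with the standard convention $t\cdot(-\infty) = -\infty$ for $t>0$), and the inequality forces $f(tx+(1-t)y) = -\infty$ as well, which is consistent. So there is no real obstacle; this is essentially a one-line verification once one writes down the convexity inequality for $f_k$ and passes to the limit. I would not expect any step here to require more than an observation about monotone limits of inequalities.
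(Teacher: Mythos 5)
Your proof is correct and is essentially the same as the paper's: both start from the convexity inequality for each $f_k$ and pass to the limit using monotonicity. The paper phrases the limit passage as a two-step iterated limit ($n \to \infty$, then $m \to \infty$), but the underlying argument is identical, and your remark about the $-\infty$ case is a harmless extra precaution.
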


\begin{proof}
We can prove the convexity directly.
\begin{equation*}
    \begin{split}
        \lim\limits_{k \to \infty} f_k(\lambda x + (1-\lambda)y) &\le
        \lambda f_n(x) + (1-\lambda) f_n(y) \\
        &\le \lambda f_m(x) + (1-\lambda) f_n(y) \\
    \end{split}
\end{equation*}
Here $m \le n$ are arbitrary positive integers. Letting $n \to \infty$ and then letting $m \to \infty$, we obtain the result.
\end{proof}

For the sake of our argument, we introduce a notion of lower semicontinuous regularization. For a family of lower semicontinuous functions $(f_\alpha)$ which is locally uniformly bounded below, its infimum $f = \inf\limits_\alpha f_\alpha$ may not be lower semicontinuous in general. Defin the \textit{lower semicontinuous regularization} of a family $(f_\alpha)$ by
$$
f^\triangle(x) = \lim\limits_{\epsilon \to 0} \inf\limits_{y \in B(x,\epsilon)} f(y) \le f(x).
$$
Then it is easy to check that $f^\triangle$ is the largest lower semicontinuous minorant of $f$. Also note that $f^\triangle(x)$ is equal to $f(x)$ whenever $f$ is lower semicontinuous at $x$. Using this notion, we are now ready to prove the following lemma.

\begin{lemma} \label{convconv}
Let $(g_m)$ be an increasing sequence of lower semicontinuous convex functions defined on $\RR^n$ converging to a convex function $g$ pointwise. Then $(g_m^*)$ is a decreasing sequence converging to $g^*$ pointwise on the relative interior of $\dom g^*$
\end{lemma}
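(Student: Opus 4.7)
The plan is to show that $h := \lim_m g_m^* = \inf_m g_m^*$ has biconjugate $h^{**}$ equal to $g^*$, and then to invoke the classical fact that a proper convex function agrees with its biconjugate on the relative interior of its effective domain. Monotonicity of $(g_m^*)$ is immediate from $g_m \le g_{m+1} \le g$ (the conjugate reverses inequalities), which simultaneously gives $h \ge g^*$ pointwise; by the preceding lemma $h$ is also convex.

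The main computation is to show that $h^* = g$. Here the hypothesis that each $g_m$ is lower semicontinuous is essential: by Remark~\ref{properties_cvxcj_3} it gives $g_m^{**} = g_m$. From $h \le g_m^*$ I would deduce $h^* \ge g_m^{**} = g_m$ for every $m$, and then passing to the supremum in $m$ yields $h^* \ge g$. For the reverse inequality, $h \ge g^*$ gives $h^* \le g^{**}$, and because $g$ is itself lower semicontinuous (as the increasing pointwise limit of lower semicontinuous functions, since each superlevel set $\{g > c\} = \bigcup_m \{g_m > c\}$ is open), Remark~\ref{properties_cvxcj_3} gives $g^{**} = g$. Combining, $h^* = g$, and conjugating once more produces $h^{**} = g^*$, using that $g^*$ is automatically lsc so that $(g^*)^{**} = g^*$.

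Finally, to deduce $h = g^*$ on $\text{relint}(\dom g^*)$, I would invoke two standard facts from convex analysis: first, for any proper convex function $f$ one has $\text{relint}(\dom f) = \text{relint}(\dom f^{**})$, which applied to $f = h$ yields $\text{relint}(\dom h) = \text{relint}(\dom g^*)$; second, a proper convex function coincides with its biconjugate (equivalently, with its lsc envelope) on the relative interior of its effective domain. Applying this second fact to $h$ gives $h = h^{**} = g^*$ on the common relative interior, as desired. The main obstacle is not any single computation but rather the necessity of restricting to the relative interior: on the relative boundary, $h$ may genuinely differ from its lsc envelope $h^{**} = g^*$, and it is precisely this phenomenon that forces the restriction in the statement of the lemma.
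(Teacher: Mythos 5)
Your proof is correct and follows essentially the same route as the paper: both establish $(\inf_m g_m^*)^{**} = g^*$ via $g_m^{**}=g_m$ and order-reversal of conjugation, and then deduce equality on the relative interior of $\dom g^*$. The only cosmetic difference is that where you invoke the standard facts that $\operatorname{relint}(\dom h) = \operatorname{relint}(\dom h^{**})$ and that a proper convex function agrees with its biconjugate on the relative interior of its domain, the paper reaches the same conclusion by explicitly introducing the lower semicontinuous regularization $(\inf_m g_m^*)^{\triangle}$, sandwiching $h^{\triangle} \le g^* \le h$, and then using continuity of the convex function $h$ on the relative interior; these are two phrasings of the same observation that the closure of a convex function differs from it only on its relative boundary.
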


\begin{proof}
First, we know that convex conjugate operation is order-reversing, so $(g_m^*)$ is a decreasing sequence of convex functions. Also, we know that for each $m$, $g_m^{**} =g_m$ by lower semicontinuity of $g_m$. Then using the well-known fact on conjugate functions $(\inf_\alpha f_\alpha)^*(x^*) = (\sup_\alpha f_\alpha^*)(x^*)$, we obtain
\begin{equation*}
    \begin{split}
        (\inf_m g_m^*)^*(x) &= (\sup_m g_m^{**})(x) \\
        &= (\sup_m g_m)(x) = g(x).
    \end{split}
\end{equation*}
Taking convex conjugate to both side again, we have $(\inf_m g_m^*)^{**}(x) = g^*(x)$. We observe that $\inf_m g_m^*$ is convex by the previous lemma. Applying Remark \ref{properties_cvxcj_3} to $\inf_m g_m^*$, we have the following inequalty:
$$
g^*(x) \le (\inf_m g_m^*)(x).
$$
In general, we can not say about the lower semicontinuity of $\inf_m g_m^*$. Since $g_m \le g$, we know that the sequence $(g_m^*)$ is locally uniformly bounded below by $g^*$ and we can think about the lower semicontinuous regularization of ${\inf_m g_m^*}$. By the definition of the lower semicontinuous regularization,
$$
{(\inf_m g_m^*)}^\triangle \le \inf_m g_m^*.
$$
Take the double conjugate to both sides, which preserves the order of inequalities. Note that ${(\inf_m g_m^*)}^\triangle$ is lower semicontinuous and convex, so the double conjugate of the left side is equal to itself ${(\inf_m g_m^*)}^\triangle$. For convexity of ${(\inf_m g_m^*)}^\triangle$, we refer to \cite[Proposition 2.2.2]{H}. Since ${(\inf_m g_m^*)}^{**}$ is equal to $g^*$, we have shown that
$$
{(\inf_m g_m^*)}^\triangle \le g^*.
$$
Combining this with $g^* \le \inf_m g_m^*$, we obtain
$$
{(\inf_m g_m^*)}^\triangle \le g^* \le \inf_m g_m^*.
$$

Since $\inf_m g_m^*$ is convex, it is continuous in the relative interior of $\dom g^*$. This implies that $\inf_m g_m^*$ in fact coincides with ${(\inf_m g_m^*)}^\triangle$ in the relative interior of $\dom g^*$. This concludes the proof.
\end{proof}
\section{Proof of the main theorem and Examples}
Now we are ready to prove the main theorem.

\begin{proof}[Proof of Theorem~\ref{mainthm1}]
If $(\varphi_m)$ is a decreasing sequence of toric psh functions with analytic singularities converging to $\varphi$ and $\mathcal{J}(\varphi_m) = \mathcal{J}(\varphi)$ for all $n \ge 1$, then $P:=P(\varphi_1)$ satisfies $(i)$, $(ii)$, and $(iii)$ in Theorem~\ref{mainthm1}.

Now assume that there exists a polyhedron $P$ satisfying the three conditions in the statement (2). Let $g$ be the convex function associated to $\varphi$. Then we can find a sequence of points $\{(u_i, \alpha_i)\}_{i=1}^{\infty}$ in $\QQ_{\ge 0}^{n+1} \times \QQ$ such that
\begin{equation} \label{countable_intersection}
\epi(g^*) = \frac{c}{2} \cdot \bigcap_{i=1}^\infty { H_{u_i,\alpha_i}^+ }
\end{equation}
where $H_{u,\alpha}^+$ is the closed half-space defined by $\{x \in \RR^{n+1} : \inp{u,x}\ge \alpha \}$. Indeed, let $q$ be a point  in $\RR^{n+1} \backslash \epi(g^*)$. Since $\epi(g^*)$ is a closed convex set and $d(q,\epi(g^*))>0$, there exists $(u',\alpha') \in \RR^{n+1}_{\ge 0} \times \RR$ such that $H_{u',\alpha'}$ separates $q$ and $\epi(g^*)$ strongly, that is, there exists $\epsilon > 0$ such that $q + \epsilon B(0,1) \subset \interior(H_{u',\alpha'}^-)$ and $\epi(g^*) + \epsilon B(0,1) \subset \interior(H_{u',\alpha'}^+)$. Here, $B(0,1)$ is the unit ball in $\RR^{n+1}$ and $H_{u',\alpha'}^- := \{x \in \RR^{n+1} : \inp{u',x}\le \alpha' \}$. We can choose $(u,\alpha) \in \QQ^{n+1}_{\ge 0} \times \QQ$ which is sufficiently close to $(u',\alpha')$ so that the hyperplane $H_{u,\alpha}$ also separates $q$ and $\epi(g^*)$ strongly. Enumerating all such $(u,\alpha)$ by positive integers gives (\ref{countable_intersection}).
Let $g^*_i$ be the convex function on $\RR^n$ whose epigraph is given by
$$
(P \times \RR) \cap \left( \frac{c}{2} \cdot \bigcap_{j=1}^i {H_{u_j,\alpha_j}^+ } \right).
$$
It is obvious that $g_i^*$ is increasing in each variable and lower semicontinuous.
Let $\varphi_i$ be the psh function associated to the convex conjugate of $g_i^*$. Then all $\varphi_i$ have analytic singularities by Theorem~\ref{analytic_sing_equivalence}. Furthermore, $\varphi_i$ is equisingular to $\varphi$ since the Newton convex body $P(\varphi_i)$ of $\varphi_i$ satisfies the following inclusion:
$$
P \subseteq P(\varphi_i) \subseteq P(\varphi).
$$
Note that each $g_i^*$ is of the form (\ref{max_plus_bounded}) without additional terms, we may assume that each $\varphi_i$ is defined on $D(0,r)$. Since $(g_i^*)$ is an increasing sequence of convex functions, $(\varphi_i)$ is a decreasing sequence converging to $\varphi$ on $D(0,r)$ by Lemma~\ref{convconv}.
\end{proof}

With this theorem, we can create various examples of toric psh functions which have nice approximations. For this, given a closed convex set $P \subset \RR^n_{\ge 0}$ satisfying $P + \RR^n_{\ge 0} \subset P$, we can construct a psh function $\varphi$ defined in $D(0,r) \subset \CC^n$ for some polyradius $r$ whose Newton convex body is equal to $P$ up to the boundary, that is, their interiors are the same. This implies that $\mathcal{J}(\varphi)$ and the multiplier ideal of a psh function whose Newton convex body is $P$ are equal to each other. To elaborate the statement, we introduce the following related notion.

\begin{definition} (cf. \cite{S98}, \cite{K15})
Let $\mathfrak{a}_\bullet = (\mathfrak{a}_k)$ be a graded sequence of  ideals in $\CC[z_1 , \ldots , z_n]$, i.e. $\mathfrak{a}_p \cdot \mathfrak{a}_q \subset \mathfrak{a}_{p+q}$ for any $p, q \in \ZZ_{\ge 0}$. Then a \textit{Siu psh function} associated to $\mathfrak{a}_\bullet$ is defined as
$$ \varphi = \varphi_{\mathfrak{a}_\bullet} = \log \left( \sum_{k \ge 1} \epsilon_k \abs{\mathfrak{a}_k}^{1/k} \right) $$
where $(\epsilon_k)$ is a sequence of positive real numbers chosen so that the series converges.
\end{definition}

In \cite{KS20}, it was proved that for any given convex set $P \in \RR^n_{\ge 0}$ satisfying $P + \RR^n_{\ge 0} \subset P$, there exists a graded sequence of ideals $\mathfrak{a}_\bullet$ and a Siu psh function associated to $\mathfrak{a}_\bullet$ whose Newton convex body is exactly equal to $P$ up to the boundary (see \cite[Proposition~2.9]{KS20}). As a result, for an arbitrary convex subset $P \in \RR^n_{\ge 0}$ satisfying $P + \RR^n_{\ge 0} \subset P$, we can construct a toric psh function $\varphi$ whose Newton convex body is equal to $P$ up to the boundary.

Finally, for the proof of Corollary~\ref{cor:always_approximated}, we introduce the notion of extreme points.
\begin{definition} \cite[Definition~2.1.8]{H}
Let $K$ be a convex set. A point $x$ in $K$ is called \textit{extreme} if
$$ x = \lambda_1 x_1 + \lambda_2 x_2 , \ x_1,x_2 \in K \Rightarrow x_1 = x_2 = x $$ where $\lambda_1, \lambda_2 > 0, \lambda_1 + \lambda_2 = 1$.
\end{definition}

\begin{proof}[Proof of Corollary~\ref{cor:always_approximated}]
Let $P = \{ (x,y) \in \RR^2_{\ge 0} \mid xy \ge 1 \} \medcap \RR^2_{\ge 0}$. Then there exists a Siu psh function $\varphi$ associated to a graded sequence of monomial ideals whose Newton convex body is equal to $P$ up to the boundary. We will show that for every $c>0$, $c \varphi$ satisfies the condition (2) in Theorem~\ref{mainthm1}. There are two cases of sets of lattices we need to consider. First, let $A_1, \ldots , A_N$ be lattice points in $\RR_{>0} \setminus P(c\varphi)$. Then for each $A_j$ for $1 \le j \le N$, there exists a unique point $B_j$ on $\partial P(c\varphi)$ such that the distance between $A_j$ and $B_j$ is the distance between $A_j$ and $P(c\varphi)$. Let $H_j = \{ a_j x + b_j y + c_j = 0 \}$ be the equation of tangent line of $xy = c$ at $B_j$. Then, by changing $a_j, b_j, c_j$ slightly, we can take $H_j$ having following properties.
\begin{enumerate}
    \item For each $1 \le j \le N$, $H_j$ separates $A_j$ and $P(c\varphi)$.
    \item For each $1 \le j \le N$, $H_j$ is rational.
\end{enumerate}
Secondly, there are lattice points $B'_1, \ldots,  B'_M$ on the $\partial P(c \varphi)$. Then for each $1 \le j \le M$, let $H'_j$ be the tangent line of $xy = c$ at $B'_j$. Now, if we take the polyhedron defined as
$$P = \bigcap_{j=1}^N H_j^{+} \medcap \bigcap_{j=1}^M {H'}_j^+,$$
then this $P$ exactly satisfies the condition $(2)$ in Theorem~\ref{mainthm1}. Here, $H_j^+$ and ${H'}_j^+$ are upper hyperplanes such that contains $P(c\varphi)$.
\end{proof}

\bibliographystyle{amsplain}

\begin{thebibliography}{widest-}

\bibitem[D92]{D92}
J.-P. Demailly, \textit{Regularization of closed positive currents and intersection theory}, J. Algebraic Geom. \textbf{1} (1992), 361–409.

\bibitem[D10]{D10}
 J.-P. Demailly. \textit{Analytic methods in algebraic geometry}. Higher Education Press, 2010

\bibitem[D13]{D13} J.-P. Demailly, \emph{On the cohomology of pseudoeffective line bundles}, Complex geometry and dynamics, 51–99, Abel Symp., 10, Springer, Cham, 2015.




\bibitem [DX]{DX} J.-P. Demailly, \emph{Complex analytic and differential geometry}, self-published e-book, Institut Fourier, 455 pp, last version: June 21, 2012.


\bibitem[DK01]{DK01}
J.-P. Demailly, J. Kollár, \textit{Semi-continuity of complex singularity exponents and Kähler-Einstein metrics on Fano orbifolds}, Ann. Sci. \'Ec. Norm. Sup\'er. \textbf{34}, No. 4, (2001).

\bibitem[DPS01]{DPS01}
J.-P. Demailly, T. Peternell, M. Schneider,  \textit{Pseudo-effective line bundles on compact K{\"a}hler manifolds}, Internat. J. Math. \textbf{12} (2001), no. 6, 689–741.


\bibitem[G16]{G16}
Q. Guan, \textit{Nonexistence of decreasing equisingular approximations with logarithmic poles}, J. Geom. Anal. \textbf{27}, (2017) 886-892.

\bibitem[G20]{G20}
Q. Guan, \textit{Decreasing equisingular approximations with analytic singularities}, J. Geom. Anal. \textbf{30} (2020), 484–492.




\bibitem[Gu12]{Gu12}
H. Guenancia, \textit{Toric plurisubharmonic functions and analytic adjoint ideal sheaves}, Math. Z. \textbf{271} (2012), 1011–1035.


\bibitem[H]{H}
L. H\"ormander, \textit{Notions of Convexity}, Modern Birkh{\"a}user Classics, Birkh{\"a}user Boston, 2007.



\bibitem[H01]{H01}
J. Howald, \textit{Multiplier ideals of monomial ideals}, Trans. Amer. Math. Soc. \textbf{353}, no 7. (2001), 2665-2671.

\bibitem[K14]{K14}
D. Kim, \textit{A remark on the approximation of plurisubharmonic functions}, C. R. Math.
\textbf{352} (2014), no. 5, 387-389.

\bibitem[K15]{K15}
D. Kim, \textit{Equivalence of plurisubharmonic singularities and Siu-type metrics}, Monatsh. Math. \textbf{178} (2015), no. 1, 85–95.

\bibitem[K16]{K16}
D. Kim, \textit{Themes on non-analytic singularities of plurisubharmonic functions}, Complex analysis and geometry, 197-206, Springer Proc. Math.Stat., \textbf{144}, Springer, Tokyo, (2015).

\bibitem[KR18]{KR18} D. Kim and A. Rashkovskii, \emph{Higher Lelong numbers and convex geometry}, J. Geom. Anal. \textbf{31} (2021), no. 3, 2525–2539.

\bibitem[KS20]{KS20}
D. Kim and H. Seo, \textit{Jumping numbers of analytic multiplier ideals (with an appendix by S\'ebastien Boucksom)}, Ann. Polon. Math. \textbf{124} (2020), 257-280.

\bibitem[Ki94]{Ki94}

C. Kiselman, \emph{Plurisubharmonic functions and their singularities}, Complex potential theory (Montreal, PQ, 1993), 273–323, NATO Adv. Sci. Inst. Ser. C: Math. Phys. Sci., 439, Kluwer Acad. Publ., Dordrecht, 1994.

\bibitem[M]{Mot} T. Motzkin, \textit{Beitr\"age zur Theorie der linearen Ungleichungen}, Azriel Press, 1936.

\bibitem[R12]{R12} A. Rashkovskii, \textit{Analytic approximations of plurisubharmonic singularities}, Math. Z. 275 (2013), no. 3-4, 1217--1238.


\bibitem[R13]{R13}
A. Rashkovskii, \textit{Multi-circled singularities, Lelong numbers, and integrability index}, J. Geom.
Anal. \textbf{23} (2013), no. 4, 1976-1992.


\bibitem[Sc]{Sch}
R. Schneider, \textit{Convex Bodies: The Brunn–Minkowski Theory}, Cambridge University Press, Cambridge, 2013.


\bibitem[S98]{S98} Y.-T. Siu, \emph{Invariance of plurigenera}, Invent. Math., 134 (1998), 661-673. 


\bibitem[Z]{Z}
G. Ziegler, \textit{Lectures on Polytopes}, Springer-Verlag, New York, 1995.



\end{thebibliography}

\quad

{\parindent0pt
\normalsize

\textsc{Jongbong An}

Department of Mathematical Sciences, Seoul National University

1 Gwanak-ro, Gwanak-gu, Seoul 08826, Republic of Korea

e-mail: ajb8406@snu.ac.kr

\quad

\textsc{Hoseob Seo}

Research Institute of Mathematics, Seoul National University

1 Gwanak-ro, Gwanak-gu, Seoul 08826, Republic of Korea

e-mail: hskoot@snu.ac.kr
}

\end{document}